\begin{document}
\title{\bf Chamber structure for some equivariant relative Gromov-Witten invariants of $\mathbb{P}^1$ in genus $0$}
\author{\sl Longting Wu\footnote{Research of the author was partially supported by SRFDP grant 20120001110051.}}
\date{}
\maketitle

\begin{abstract}
In this paper, we study genus $0$ equivariant relative Gromov-Witten invariants of $\mathbb{P}^1$ whose corresponding relative stable maps are totally ramified over one point. For fixed number of marked points, we will show that such invariants are piecewise polynomials in some parameter space. The parameter space can then be divided into polynomial domains, called chambers. We determine the difference of polynomials between two neighbouring chambers. In some special chamber, which we called the totally negative chamber, we show that such a polynomial has a simple expression. The chamber structure here shares some similarities to that of double Hurwitz numbers (Goulden et al., 2005)\cite{GJV} (Shadrin et al., 2008)\cite{SSV} (Cavalieri et al., 2011)\cite{CJM} (Johnson, 2015)\cite{J}.

\end{abstract}

\numberwithin{equation}{section}

\newtheorem{theorem}{Theorem}[section]
\newtheorem{conjecture}[theorem]{Conjecture}
\newtheorem{lemma}[theorem]{Lemma}
\newtheorem{corollary}[theorem]{Corollary}
\theoremstyle{remark}
\newtheorem{remark}{Remark}
\theoremstyle{definition}
\newtheorem{example}{Example}
\newtheorem{definition}[theorem]{Definition}

\section{Introduction}
In \cite{CJM}\cite{GJV}\cite{J}\cite{SSV}, the chamber structure for double Hurwitz numbers has been determined. In this paper, we will show that a similar chamber structure also appears for the following genus $0$ equivariant relative Gromov-Witten invariants of $\mathbb{P}^1$:
\begin{equation}\label{key}
\Big\langle d\Big |\prod_{i=1}^m \prod_{k_i=0}^{l_i}(k_i\tau+[\infty])\Big\rangle_0^{T}
\end{equation}
see (\ref{keyinv}) for precise definition.

Without further explanation, we always assume that $m\geq 2$. By the dimension constraint (\ref{virdim}), it is easy to see that if $1+\sum_{i=1}^m l_i< d$, then (\ref{key}) vanishes. So we may assume that $1+\sum_{i=1}^m l_i\geq d$.

Let $t$ be the generator of $H^*_{\mathbb{C}^*}(pt)$ corresponding to the dual of the standard representation of $\mathbb{C}^*$. In other words, $t$ is the hyperplane class of $\mathbb{C}\mathbb{P}^{\infty}$. Then we may write (\ref{key}) as
\[\Big\langle d\Big |\prod_{i=1}^m \prod_{k_i=0}^{l_i}(k_i\tau+[\infty])\Big\rangle_0^{T}=F(l_1,\ldots,l_m,d)t^{1+\sum l_i-d}\]
where $F(l_1,\ldots,l_m,d)\in\mathbb{Q}$.

In order to formulate our results, we need to introduce some notation. We treat $x_1=l_1,\ldots,x_m=l_m,y=d$ as coordinates of $\mathbb{R}^{m+1}$. The parameter space is given by
\[S_{m}:=\bigr\{(x_1,\ldots,x_m,y)\in \mathbb{R}^{m+1}\bigr|x_1\geq x_2\ldots \geq x_m\geq 0,~y\geq 1,~1+\sum_{i=1}^mx_i\geq y\bigr\}.\]
A resonance is a hyperplane
\[H_{I}:=\bigr\{(x_1,\ldots,x_m,y)\in S_m~\bigr|~\sum_{i\in I}x_i=y\bigr\}\]
where $I\subset \{1,2,\ldots,m\}$ and $|I|\leq m-2$. The connected components of the complement to the union of all resonances are called chambers.

Now our results can be summarized as follows.
\begin{theorem}\label{keythm1}
If $(x_1,\ldots,x_m,y)\in S_m\cap\mathbb{Z}^{m+1}$ varies in the closure of one chamber, then $F(x_1,\ldots,x_m,y)$ can be expressed as a polynomial of $x_1,\ldots,x_m,y$ with degree at most $2m-4$.
\end{theorem}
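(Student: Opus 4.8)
The plan is to compute $F$ by virtual localization with respect to the standard $\mathbb{C}^*$-action on $\mathbb{P}^1$ and to extract from the resulting graph sum an explicit combinatorial formula that is manifestly piecewise polynomial. Since the target is $\mathbb{P}^1$ and the genus is $0$, the $\mathbb{C}^*$-fixed loci in the moduli of relative stable maps are indexed by decorated trees: each irreducible component of the domain is either contracted over one of the two fixed points $0,\infty$ or is a fully rigid multiple cover of the invariant line, while the relative condition of full ramification over $\infty$ forces the rubber target there together with a single incidence of multiplicity $d$. First I would write down, for each such tree, its contribution as a product of vertex factors (genus-$0$ Hodge/$\psi$-integrals, which are elementary in genus $0$) and the usual edge factors (rational in the equivariant parameter $t$), with the $m$ marked-point insertions $\prod_{k_i=0}^{l_i}(k_i\tau+[\infty])$ distributed among the vertices.

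The second step is to evaluate the insertions on the fixed loci. The factor $[\infty]$ localizes the corresponding marked point onto a component lying over $\infty$, so only trees compatible with all $m$ point-constraints survive, while the powers of $\tau$ contribute $\psi$-descendants that on a genus-$0$ fixed locus integrate to elementary rational functions of $t$ and of the edge multiplicities. After cancelling the power $t^{1+\sum l_i-d}$ predicted by the dimension constraint (\ref{virdim}), the leading coefficient $F(x_1,\dots,x_m,y)$ becomes a finite sum, over combinatorial types and over the internal edge multiplicities, of products of factors that are \emph{polynomial} in the $x_i$, in $y$, and in those multiplicities. A conservation (balancing) law along the tree expresses each edge multiplicity as a partial sum of the form $\sum_{i\in I}x_i-y$ in the insertion data.

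The third step is the polynomiality and wall analysis. A fixed combinatorial type contributes only when all of its edge multiplicities are positive, i.e.\ only when the relevant linear forms $\sum_{i\in I}x_i-y$ have a definite sign; on the interior of a single chamber these signs are locally constant, so the set of contributing types does not change and each surviving term is a genuine polynomial in $(x_1,\dots,x_m,y)$. The forms that can change sign are exactly $\sum_{i\in I}x_i-y$ with $I\subsetneq\{1,\dots,m\}$, whose vanishing loci are the resonances $H_I$; this pins the walls to the stated hyperplanes, the extreme sizes $|I|=m-1,m$ being controlled by the defining inequalities $x_1\ge\cdots\ge x_m\ge 0$ and $1+\sum x_i\ge y$ of $S_m$, so that no spurious walls appear. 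Passing to the closure of a chamber is harmless for a polynomial identity, which yields the claimed piecewise-polynomial structure on $S_m\cap\mathbb{Z}^{m+1}$. The residual summation over free edge multiplicities within their chamber-determined ranges is carried out with Faulhaber's formula, $\sum_{j=0}^{N}j^{p}$ being a polynomial of degree $p+1$ in $N$, which keeps each summed contribution polynomial of controlled degree.

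I expect the degree bound $2m-4$ to be the main obstacle, as it requires careful bookkeeping rather than a single clean estimate. The strategy is a dimension count: the genus-$0$ relative problem with $m$ insertions leaves at most $2(m-2)$ descendant/summation degrees once the $m$ point-constraints are imposed, and each application of Faulhaber raises the polynomial degree by exactly the amount released by a $\psi$-integral. The hard part is to verify that, summed across all combinatorial tree types, the total degree never exceeds $2m-4$, and simultaneously that the only sign-changing forms are the $\sum_{i\in I}x_i-y$ with $|I|\le m-2$. Once this degree and wall bookkeeping is in place, the genus-$0$ vertex integrals and the assembly of the tree sum are routine, completing the argument.
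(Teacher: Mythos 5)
Your starting point coincides with the paper's: apply relative virtual localization, observe that any marked point assigned to the vertex over $q_0$ kills the integrand (since $[\infty]$ restricts to zero there), and organize the surviving graphs by how the $m$ marked points are distributed over the vertices at infinity; your identification of the walls with the loci $\sum_{i\in I}x_i=y$ is also right in spirit. But the mechanism you propose for polynomiality contains a genuine gap. First, there is no balancing law in this problem: the insertions $\prod_{k_i=0}^{l_i}(k_i\tau+[\infty])$ are descendant conditions, not ramification profiles, so the edge degrees $d_r$ are \emph{not} determined by partial sums $\sum_{i\in I}x_i-y$ of the input data (that structure belongs to double Hurwitz numbers, on which your outline is modelled). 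Here the $d_r$ are free summation variables constrained only by $\sum_r d_r=d$, together with lower bounds $d_r\geq l_i+1$ at vertices carrying a single marked point. Second, and more fatally, the graph contributions are not polynomial in the $d_r$: each edge contributes a factor of the type $d_r^{d_r}/d_r!$ (and $d_r^{d_r-l_i-1}/(d_r-l_i-1)!$ at singleton vertices), coming from the Euler class of the degree-$d_r$ multiple cover. Consequently the sum over edge degrees is not a finite sum of polynomial summands over a lattice region, and Faulhaber's formula gives you nothing.

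What the paper actually does at this point is resum the degrees using the tree series $\mathcal{W}(q)=\sum_{n\geq 1}\frac{n^{n-1}}{n!}q^n$ and Lambert-$W$ identities such as (\ref{keyid}), arriving at $F=\sum_{|I|\leq m-2}R_I$ where $R_I$, defined in (\ref{R_I}), is the coefficient of $q^{\,y-\sum_{i\in I}(x_i+1)}$ in an explicit series. Polynomiality in $(x_1,\ldots,x_m,y)$ then rests on two facts for which your proposal has no substitute: the coefficient-extraction identity of Lemma \ref{comb}, $[q^{\mu}]\,\mathcal{W}^a e^{-\mu\mathcal{W}}(1-\mathcal{W})^{-b}=\binom{b-2+\mu-a}{b-2}$, which is a polynomial in $\mu=y-\sum_{i\in I}(x_i+1)$, and the polynomiality in $x_j$ (of degree at most $2a_j$) of the Stirling numbers $A_{x_j}^{x_j-a_j}$ from \cite{CGHJK}, which handles extracting $w_j^{x_j}$ from $(w_j+1)_{x_j}$; the bound $2m-4$ comes from adding these two degree sources (Lemma \ref{keylem}), not from a dimension count plus Faulhaber. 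Finally, the chamber structure appears because $R_I$ vanishes identically once $\sum_{i\in I}x_i\geq y$ (the required power of $q$ is negative), while $R_I=\mathcal{R}_I$ when $\sum_{i\in I}x_i\leq y$ — and even this needs the separate argument in part (II) of the proof of Lemma \ref{keylem} showing $\mathcal{R}_I$ itself vanishes on the strip $\sum_{i\in I}x_i\leq y<\sum_{i\in I}(x_i+1)$, a subtlety invisible in your picture of positivity of determined edge weights. So while your first step is correct, the proposal as written stalls exactly at the degree summation, which is the heart of the proof.
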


Let $\mathfrak{c}$ be a chamber. Then we use $P_{\mathfrak{c}}$ to denote the corresponding polynomial $F$. When chamber $\mathfrak{c}$ changes, $P_{\mathfrak{c}}$ may be different. The difference can be described by a wall crossing formula.
\begin{definition}
Let $\mathfrak{c}_1$ and $\mathfrak{c}_2$ be two neighbouring chambers which are bounded by the wall $H_I$. Suppose that $\sum_{i\in I} x_i<y$ in $\mathfrak{c}_1$. We define
\[WC_{I}(x_1,\ldots,x_m,y):=P_{\mathfrak{c}_1}(x_1,\ldots,x_m,y)-P_{\mathfrak{c}_2}(x_1,\ldots,x_m,y).\]
\end{definition}
Now the wall crossing formula is just a polynomial formula for $WC_{I}$. In order to formulate our wall crossing formula, we need to introduce some notation.

\begin{definition}\label{par2}
Choosing a nonempty subset $J\subset \{1,2,\ldots,m\}$. We define $P_J^{\geq 2}$ to be the set of those partitions of $J$ such that each subset contains at least two elements.
\end{definition}

We set
\begin{equation}\label{lamW}
\mathcal{W}(q):=\sum_{n=1}^{\infty}\frac{n^{n-1}}{n!}q^n
\end{equation}
to be a power series of $q$ whose radius of convergence is $e^{-1}$. For $z\in \mathbb{R}$, we define the operator
\begin{equation}\label{oper}
\mathcal{O}_z:=\frac{\mathcal{W}}{1-\mathcal{W}}\frac{d}{d\mathcal{W}}+z.
\end{equation}
Let $w$ be an indeterminate and $a\in \mathbb{Z}_{\geq 0}$. We define
\[(w+1)_a:=
\begin{cases}
  (w+1)(w+2)\ldots(w+a), & \mbox{if } a>0, \\
  1, & \mbox{if } a=0.
\end{cases}
\]

Let $w_1,w_2,\ldots,w_m$ be indeterminates. For each $I\subset \{1,2,\ldots,m\}$, we define
\begin{equation}\label{R_I}
\begin{aligned}
R_I:=&\Bigg[\prod_{j\in I^c}w_j^{x_j}q^{y-\sum_{i\in I}(x_i+1)}\Bigg]\prod_{j\in I^c}(w_j+1)_{x_j}\sum_{\{\mathcal{I}_1,\ldots \mathcal{I}_s\}\in P_{I^c}^{\geq 2}}(-1)^{s+|I|-1}y^{s+|I|-2}\\
&\times\frac{e^{-\bigr(y-\sum_{i\in I}(x_i+1)\bigr)\mathcal{W}}}{(1-\mathcal{W})^{|I|}}\prod_{r=1}^s\mathcal{O}_{
\sum_{j\in \mathcal{I}_r}w_j}^{|\mathcal{I}_r|-2}\Bigg(\frac{\mathcal{W}}{(1-\mathcal{W})^3}\Bigg).
\end{aligned}
\end{equation}
Here, $I^c$ is the complement of $I$ in $\{1,2,\ldots,m\}$, $(x_1,\ldots,x_m,y)\in S_m\cap \mathbb{Z}^{m+1}$ and $\mathcal{O}_{
\sum_{j\in \mathcal{I}_r}w_j}^{|\mathcal{I}_r|-2}$ is an operator given by the $(|\mathcal{I}_r|-2)$-th power of $\mathcal{O}_{
\sum_{j\in \mathcal{I}_r}w_j}$. We always use the notation $[A]B$ to denote the coefficient of term $A$ in $B$.

We remark that if $I=\{1,2,\ldots,m\}$, then $I^c$ is the empty set. In this case, $R_I$ should be understood as
\begin{equation}\label{s-R_I}
\Big[q^{y-\sum_{i=1}^m(x_i+1)}\Big](-1)^{m-1}y^{m-2}\frac{e^{-\bigr(y-\sum_{i=1}^m(x_i+1)\bigr)\mathcal{W}}}{(1-\mathcal{W})^{m}}.
\end{equation}

By Lemma \ref{keylem}, we know that if $|I|\leq m-2$ and $\sum_{i\in I}x_i\leq y$, then $R_I$ can be expressed as a polynomial $\mathcal{R}_I$ in terms of $x_1,x_2,\ldots,x_m,y$ with degree at most $2m-4$. The proof of Lemma \ref{keylem} itself also gives an effective way to compute $\mathcal{R}_I$.

Now our wall crossing formula becomes
\begin{theorem}\label{keythm2}
\[WC_I=\mathcal{R}_I.\]
\end{theorem}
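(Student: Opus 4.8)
The plan is to obtain $WC_I$ directly from the closed generating-function formula for $F$ that underlies the proof of Theorem \ref{keythm1}, and then to match the jump across $H_I$ with the series $R_I$ of (\ref{R_I}) term by term. First I would start from the virtual localization formula for the invariant (\ref{key}) with respect to the fiberwise $\mathbb{C}^*$-action on $\mathbb{P}^1$. Since the maps are genus $0$ and totally ramified over the relative point, the torus-fixed loci are indexed by localization graphs that are trees rooted at the full-ramification vertex, with the $m$ marked points carrying the rising-factorial insertions $\prod_{k_i=0}^{l_i}(k_i\tau+[\infty])$ distributed over the vertices lying over the two fixed points. Performing the vertex and edge integrals and resumming the contracted-component contributions packages $F$ into a coefficient extraction from a master series in variables $w_1,\dots,w_m,q$, in which the tree function $\mathcal{W}=qe^{\mathcal{W}}$ of (\ref{lamW}) resums the genus-$0$ vertex contributions and the operator $\mathcal{O}_z$ of (\ref{oper}) implements the rising-factorial insertions; the factors $(w_j+1)_{x_j}$ and the extraction $[\prod_j w_j^{x_j}q^{\cdots}]$ then record the ramification data of the markings.

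Next I would isolate the origin of the chamber dependence, following the mechanism familiar from double Hurwitz numbers. In the localization sum, the effective degree flowing across the node separating the markings indexed by a subset $I'$ from the remaining markings is governed by the sign of $y-\sum_{i\in I'}x_i$, and the associated graphs exist only when this quantity is positive. Hence the family of contributing graphs is constant inside each chamber and changes only across a resonance $H_{I'}\colon \sum_{i\in I'}x_i=y$. Crossing $H_I$ from $\mathfrak{c}_1$ (where $\sum_{i\in I}x_i<y$) to $\mathfrak{c}_2$ therefore switches off exactly the graphs in which the $I$-node carries positive degree, so that $P_{\mathfrak{c}_1}-P_{\mathfrak{c}_2}$ equals the total contribution of precisely that family. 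I would then identify this contribution with $R_I$: the parts indexed by $I$ fuse at the wall into a single full-ramification condition, producing the factor $e^{-(y-\sum_{i\in I}(x_i+1))\mathcal{W}}/(1-\mathcal{W})^{|I|}$ with the sign $(-1)^{|I|-1}$ and a power of $y$, while the remaining markings $I^c$ are distributed over the complementary genus-$0$ trees. Stability of those components (valence at least three in genus $0$) forces each block carrying markings to contain at least two of them, which is exactly the constraint $P_{I^c}^{\geq 2}$; summing over the stable configurations on a block $\mathcal{I}_r$ resums to $\mathcal{O}_{\sum_{j\in\mathcal{I}_r}w_j}^{|\mathcal{I}_r|-2}\!\bigl(\mathcal{W}/(1-\mathcal{W})^3\bigr)$, with the remaining signs and $y$-powers assembling into $(-1)^{s}y^{s+|I|-2}$. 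Having matched $WC_I$ with $R_I$, I would invoke Lemma \ref{keylem} to replace $R_I$ by the polynomial $\mathcal{R}_I$, yielding Theorem \ref{keythm2}.

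The hard part will be the closed-form resummation and the exact sign and $y$-power bookkeeping in this last matching step. The central computation is to show that the vertex sum over all genus-$0$ stable decorations of a block resums to the stated power of the operator $\mathcal{O}$ applied to $\mathcal{W}/(1-\mathcal{W})^3$, and that the edge/degree sums over the node produce the kernel $e^{-(y-\sum_{i\in I}(x_i+1))\mathcal{W}}/(1-\mathcal{W})^{|I|}$; both rest on Lagrange inversion for $\mathcal{W}=qe^{\mathcal{W}}$, which converts the $q$-coefficient extraction into a residue in $\mathcal{W}$ and is also what forces the precise factors $(-1)^{s+|I|-1}$ and $y^{s+|I|-2}$ to emerge. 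Tracking these combinatorial signs and the interplay between the $|I|$-fold and block contributions is where the genuine work lies. I expect the first nontrivial case $m=3$, $|I|=1$ (so that $P_{I^c}^{\geq 2}$ is the single block $I^c$, $s=1$, and the operator power is $\mathcal{O}^{0}=\mathrm{id}$) to serve as a clean base check that fixes the sign and normalization conventions before the general induction on $|I^c|$ is carried out.
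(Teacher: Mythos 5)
Your proposal follows essentially the same route as the paper's proof: the paper likewise uses relative virtual localization and the Lambert-type resummation (via $q=\mathcal{W}e^{-\mathcal{W}}$ and the operators $\mathcal{O}_z$) to obtain the decomposition $F=\sum_I R_I$, where $R_I$ collects exactly the graphs whose markings in $I$ sit alone on contracted components over $q_\infty$; it then observes that $R_I$ vanishes identically on the side of $H_I$ where $\sum_{i\in I}x_i\geq y$, so that each chamber polynomial is $\sum_{I'}\mathcal{R}_{I'}$ over the subsets with $\sum_{i\in I'}x_i<y$, and the wall crossing removes precisely the single term $\mathcal{R}_I$ by Lemma \ref{keylem}. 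The only differences are cosmetic: the paper performs the resummation directly rather than by induction on $|I^c|$, and the switched-off family has one contracted vertex per element of $I$ (each forcing its own edge degree $d_r\geq x_i+1$) rather than a single ``$I$-node'' of positive degree.
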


Our next theorem shows that there exists one special chamber such that $P_{\mathfrak{c}}$ has a simple expression.

\begin{definition}
The totally negative chamber $\mathfrak{c}^{tn}$ is a chamber such that $\sum_{i\in I}x_i<y$ for all $I\subset \{1,2,\ldots,m\}$ which satisfy $|I|\leq m-2$.
\end{definition}

For totally negative chamber $\mathfrak{c}^{tn}$, we have
\begin{theorem}\label{keythm3}
\begin{equation}\label{tnfm}
P_{\mathfrak{c}^{tn}}=y^{m-2}{1+\sum_{i=1}^m x_i-y+m-2\choose m-2}
\end{equation}
where ${*\choose *}$ is the binomial coefficient.
\end{theorem}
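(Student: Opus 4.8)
The plan is to compute $F$ directly in the totally negative regime, where the localization/generating-function machinery behind Theorems \ref{keythm1}--\ref{keythm2} should collapse to a single term, and then to read off the closed form by Lagrange inversion on the tree function $\mathcal{W}$. Recall that $\mathcal{W}$ satisfies $\mathcal{W}=qe^{\mathcal{W}}$, equivalently $q=\mathcal{W}e^{-\mathcal{W}}$, which is exactly why the logarithmic derivative $q\frac{d}{dq}=\frac{\mathcal{W}}{1-\mathcal{W}}\frac{d}{d\mathcal{W}}$ appears inside the operator $\mathcal{O}_z$ of \eqref{oper}. Throughout set $N:=1+\sum_{i=1}^m x_i-y$, which is $\ge 0$ by the dimension constraint.

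First I would return to the coefficient-extraction description of $F$ that underlies the wall-crossing formula and specialize it to $\mathfrak{c}^{tn}$. The defining feature of the totally negative chamber is that $\sum_{i\in I}x_i<y$ for every $I$ with $|I|\le m-2$: the target degree $y$ dominates every admissible partial sum. In the localization sum this is precisely the regime in which no resonant splitting of the marked points occurs, so every contribution that distributes marked points over a $w_j$-extraction $[\prod_{j\in I^c}w_j^{x_j}]$ or over a nontrivial set partition in $P^{\geq 2}_{I^c}$ is forced to extract a power of $q$ below the support of its tree-function series, and hence vanishes. I expect only one contribution to survive, of the same functional shape as the special value \eqref{s-R_I} of $R_I$ but evaluated at the unique admissible $q$-order and with positive sign, namely
\[F=y^{m-2}\,\big[q^{N}\big]\frac{e^{-N\mathcal{W}}}{(1-\mathcal{W})^{m}}.\]
Note that \eqref{s-R_I} itself asks for the coefficient of $q^{\,y-\sum(x_i+1)}=q^{\,1-N-m}$, a negative power, so it vanishes outright; the surviving base contribution instead sits at the admissible order $N$.

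The extraction is then a clean, self-contained computation. From $q=\mathcal{W}e^{-\mathcal{W}}$, Lagrange inversion gives $[q^{n}]H(\mathcal{W})=[t^{n}]\,H(t)(1-t)e^{nt}$ for any series $H$; applying this with $H(t)=e^{\alpha t}/(1-t)^{p}$ yields $[q^{n}]\,e^{\alpha\mathcal{W}}/(1-\mathcal{W})^{p}=[t^{n}]\,e^{(n+\alpha)t}/(1-t)^{p-1}$. Specializing $\alpha=-n$ annihilates the exponential and leaves
\[\big[q^{n}\big]\frac{e^{-n\mathcal{W}}}{(1-\mathcal{W})^{p}}=\binom{n+p-2}{p-2}.\]
With $n=N$ and $p=m$ this is exactly $\binom{N+m-2}{m-2}$, so together with the factor $y^{m-2}$ we recover \eqref{tnfm}; the positive sign is confirmed by the base value $F(0,\dots,0,1)=1$, where $N=0$.

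The main obstacle is the reduction step, i.e.\ rigorously proving that in $\mathfrak{c}^{tn}$ the full sum over subsets $I$ and set partitions really does degenerate to the single displayed term. Concretely one must show two things: first, that once $\sum_{i\in I}x_i<y$, every term carrying a nontrivial $w_j$-extraction or a partition with $s\ge 1$ proper parts demands a $q$-coefficient of negative order, using that each factor $e^{-M\mathcal{W}}/(1-\mathcal{W})^{k}$ and each operator image $\mathcal{O}_{\,\sum_{j\in\mathcal{I}_r}w_j}^{\,|\mathcal{I}_r|-2}\big(\mathcal{W}/(1-\mathcal{W})^{3}\big)$ is a power series in $q$ with no negative part; and second, that the alternating partition sum $\sum_{\{\mathcal{I}_1,\dots,\mathcal{I}_s\}}(-1)^{s-1}y^{s-2}\prod_r \mathcal{O}_{\,\sum_{j\in\mathcal{I}_r}w_j}^{\,|\mathcal{I}_r|-2}\big(\mathcal{W}/(1-\mathcal{W})^{3}\big)$ produces no spurious low-order term that would survive the extraction. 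As consistency checks I would confirm the degree bound $2m-4$ predicted by Theorem \ref{keythm1} (the binomial contributes degree $m-2$ in the $x_i,y$ and $y^{m-2}$ another $m-2$), verify the boundary face $1+\sum_i x_i=y$ where $N=0$ forces $F=y^{m-2}$, and test $m=2,3$ against a direct evaluation.
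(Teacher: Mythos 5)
Your closed form and your coefficient extraction are both correct, but the reduction they rest on is not just the ``main obstacle'' you flag --- it is false as stated. In the totally negative chamber the sum (\ref{keyfm2}) does not collapse: for every $I$ with $|I|\le m-2$ one has $\sum_{i\in I}x_i<y$ there, so $R_I$ extracts the coefficient of a \emph{nonnegative} power of $q$ and is generically nonzero; this is exactly why Lemma \ref{keylem} is needed to turn each $R_I$ into a polynomial $\mathcal{R}_I$, and why $P_{\mathfrak{c}^{tn}}$ is a priori the sum of all of them. For $m=3$, for instance, the four subsets $I=\emptyset,\{1\},\{2\},\{3\}$ all contribute nontrivially, and only their sum equals $y(x_1+x_2+x_3+2-y)$. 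Worse, the term you claim survives, $y^{m-2}\bigl[q^{N}\bigr]e^{-N\mathcal{W}}/(1-\mathcal{W})^{m}$ with $N=1+\sum_{i}x_i-y$, is not a term of the localization sum at all: the only contribution of that shape is $R_{\{1,\dots,m\}}$, which extracts the coefficient of $q^{\,y-\sum_i(x_i+1)}=q^{\,1-N-m}$, a negative power, and hence vanishes identically on $S_m$ --- as you yourself observe. Postulating a ``surviving base contribution at the admissible order $N$ with positive sign'' has no basis in (\ref{contri}); nothing in the localization sum sits at that $q$-order.

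What actually makes your reflected formula true is a global polynomial identity, proved in the paper by continuation \emph{off} $S_m$. On $D_m=\{x_1\ge\dots\ge x_m\ge 0,\ y\ge\sum_{i=1}^m(x_i+1)\}$, which is disjoint from $S_m$, the invariant equals $t^{1+\sum_i x_i-y}\sum_I R_I$ with a strictly negative power of $t$, hence vanishes; therefore $\sum_{|I|\le m-2}R_I=-R_{\{1,\dots,m\}}$ on $D_m$. There the $|I|=m$ term is an honest nonnegative-power coefficient, equal to $(-1)^{m-1}y^{m-2}\binom{y-\sum_i(x_i+1)+m-2}{m-2}$ by Lemma \ref{comb}, while $\sum_{|I|\le m-2}R_I=\sum_{|I|\le m-2}\mathcal{R}_I$ by Lemma \ref{keylem}. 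Since both sides are polynomials agreeing on all of $D_m\cap\mathbb{Z}^{m+1}$, the identity holds as polynomials, and the sign flip $\prod_{j=0}^{m-3}\bigl(y-\sum_i(x_i+1)+m-2-j\bigr)=(-1)^{m-2}\prod_{j=0}^{m-3}\bigl(1+\sum_i x_i-y+m-2-j\bigr)$ converts it into (\ref{tnfm}) on $\overline{\mathfrak{c}^{tn}}$. In other words, your ``single term'' is the reflection of a term that vanishes on $S_m$, and its equality with the full sum is established by evaluation in a region where the invariant itself is zero --- not by a term-by-term collapse in $\mathfrak{c}^{tn}$. Your Lagrange-inversion identity $[q^n]\,e^{-n\mathcal{W}}/(1-\mathcal{W})^{p}=\binom{n+p-2}{p-2}$ is correct and gives a clean alternative proof of Lemma \ref{comb}, but that is the easy half; to repair the argument you must replace the claimed collapse with precisely this continuation-to-$D_m$ step.
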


Obviously, $P_{\mathfrak{c}^{tn}}$ is a degree $2m-4$ polynomial in terms of $x_1,\ldots,x_m,y$. We remark that by using the convention of (\ref{conven}), the R.H.S of (\ref{tnfm}) also gives $F$ when $m=1$. This can be easily deduced by using relative virtual localization formula (\ref{rvlf}). We will omit the details of computation here.

By Theorems \ref{keythm2}, \ref{keythm3}, each $P_{\mathfrak{c}}$ can be determined by choosing a path from the given chamber $\mathfrak{c}$ to the totally negative chamber $\mathfrak{c}^{tn}$ and summing over all the differences.

Observe that those points $(x_1,\ldots,x_m,y)\in S_m$ which satisfy
\[1+\sum_{i=1}^m x_i-y=0\]
sit on the totally negative chamber $\mathfrak{c}^{tn}$. So we can use (\ref{tnfm}) to compute them. In that case, if we take the non-equivariant limit of (\ref{key}), then we get
\begin{corollary}\label{keycor}
If $1+\sum_{i=1}^m l_i=d$, then we have
\[\Big\langle d\Big|\prod_{i=1}^m\tau_{l_i}([pt])\Big\rangle_{0,d}=\frac{d^{m-2}}{l_1!\ldots l_m!}.\]
Here, the L.H.S in the above identity is defined by (\ref{neqin}).
\end{corollary}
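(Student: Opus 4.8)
The plan is to obtain the identity in two independent pieces and then match them: on the one hand, evaluate the equivariant invariant $F(l_1,\dots,l_m,d)$ at the locus $1+\sum l_i=d$ using Theorem~\ref{keythm3}; on the other hand, extract the non-equivariant descendent invariant $\langle d\,|\,\prod_i\tau_{l_i}([pt])\rangle_{0,d}$ from the equivariant invariant $(\ref{key})$ by a non-equivariant limit. The only genuinely geometric input will be the identification of the leading term of the insertion $\prod_{k_i=0}^{l_i}(k_i\tau+[\infty])$ as $t\to 0$.

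First I would record that the locus $1+\sum_i x_i=y$ lies in the closure of the totally negative chamber $\mathfrak{c}^{tn}$: for any $I$ with $|I|\le m-2$ one has $\sum_{i\in I}x_i\le\sum_{i=1}^{m-2}x_i=(y-1)-x_{m-1}-x_m<y$, so every such point satisfies $\sum_{i\in I}x_i<y$ and hence sits in $\overline{\mathfrak{c}^{tn}}$. Since Theorem~\ref{keythm1} provides a single polynomial $P_{\mathfrak{c}^{tn}}$ on this closure, I may evaluate $(\ref{tnfm})$ directly at $(l_1,\dots,l_m,d)$ with $1+\sum l_i=d$. There the argument of the binomial coefficient becomes $0+m-2$, so the coefficient equals $\binom{m-2}{m-2}=1$, and therefore $F(l_1,\dots,l_m,d)=d^{m-2}$.

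The main step is to read off the non-equivariant descendent invariant. Writing $E:=\mathrm{ev}^*[\infty]$ for the pullback of the equivariant point class and $\tau$ for the cotangent-line class (as fixed by $(\ref{keyinv})$), the target relation $[\infty]^2=w[\infty]$, with $w=[\infty]|_\infty$ the tangent weight at $\infty$ (a multiple of $t$), pulls back to $E^2=wE$, hence $E^{n}=w^{n-1}E$ for $n\ge 1$. Expanding in elementary symmetric functions gives $\prod_{k_i=0}^{l_i}(k_i\tau+[\infty])=\sum_{j=0}^{l_i}e_j(1,\dots,l_i)\,w^{l_i-j}\,\tau^{j}E$, so as $t\to 0$ every term with $j<l_i$ is annihilated by the factor $w^{l_i-j}$ and only $e_{l_i}(1,\dots,l_i)\,\tau^{l_i}E=l_i!\,\psi_i^{l_i}[pt]$ survives. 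Taking the non-equivariant limit of $(\ref{key})$ term by term therefore produces $\prod_i l_i!$ times $\langle d\,|\,\prod_i\tau_{l_i}([pt])\rangle_{0,d}$.

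Finally I would match the two computations. Because $1+\sum l_i=d$, the exponent of $t$ in $(\ref{key})$ is $0$, so the equivariant invariant is already the pure number $F=d^{m-2}$ and coincides with its own non-equivariant limit; combining with the previous paragraph yields $d^{m-2}=\prod_i l_i!\,\langle d\,|\,\prod_i\tau_{l_i}([pt])\rangle_{0,d}$, which rearranges to the claimed formula. The hard part is the rigorous justification of this term-by-term non-equivariant limit: one must verify that passing to $t=0$ commutes with the virtual integral, which is legitimate precisely because the invariant is $t$-homogeneous of degree $0$ here, and confirm that no lower descendent $\tau_{j_i}([pt])$ with $j_i<l_i$ contributes. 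The latter is consistent with the dimension constraint, since $\sum(j_i+1)=\mathrm{vdim}=d+m-1$ together with $j_i\le l_i$ and $\sum(l_i+1)=\mathrm{vdim}$ forces $j_i=l_i$ for every $i$. This is exactly the point at which the precise definitions $(\ref{keyinv})$, $(\ref{neqin})$ and the relative virtual localization formula $(\ref{rvlf})$ must be invoked to make the limit honest.
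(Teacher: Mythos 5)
Your proposal is correct and follows essentially the same route as the paper: the paper also observes that the locus $1+\sum_i x_i = y$ lies in (the closure of) the totally negative chamber, evaluates $(\ref{tnfm})$ there to get $F=d^{m-2}$, and then takes the non-equivariant limit of $(\ref{key})$, where the expansion $\prod_{k_i=0}^{l_i}(k_i\psi_i+ev_i^*[pt])=l_i!\,\psi_i^{l_i}ev_i^*[pt]$ (forced by $[pt]^2=0$) produces the factor $\prod_i l_i!$. Your write-up merely makes explicit the limit argument that the paper leaves as a remark, so there is nothing to correct.
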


We remark that Corollary \ref{keycor} is a generalization of Lemma 1.4 in \cite{OP1} which gives the case when $m=1$.

The motivation to consider equivariant relative Gromov-Witten invariants of $\mathbb{P}^1$ in such a form comes from the author's work \cite{W}. In that paper, we need to compute some relative invariants of $\mathbb{P}^1$-bundle, by \cite{MP} Section 1.2, which is equivalent to compute equivariant relative Gromov-Witten invariants of $\mathbb{P}^1$ in the form of (\ref{key}).

Although (\ref{key}) can be computed by using the infinite wedge formalism in \cite{OP3}, it seems that it is not easy to directly deduce the chamber structure of (\ref{key}). We will compute (\ref{key}) by using relative virtual localization formula (\ref{rvlf}). The assignment of absolute marked points in the localization graphs will naturally give the chamber structure. We also use some combinatorial tricks to sum over all the contributions in the relative virtual localization formula. In the proofs of Theorems \ref{keythm1}, \ref{keythm2}, \ref{keythm3}, we will always assume that $m\geq 2$.

The paper is organized as follows. Section \ref{1} gives a brief introduction of relative Gromov-Witten invariants and relative virtual localization formula. Section \ref{2} gives proofs of Theorems \ref{keythm1}, \ref{keythm2}, \ref{keythm3}. Section \ref{3} gives examples of chamber structure when $m=2,3$.

\textbf{Acknowledgement.}The author thanks Professor Xiaobo Liu for his patience and guidance during all the time.

\section{Preliminaries}\label{1}
\subsection{Relative Gromov-Witten invariants of $\mathbb{P}^1$}\label{pre1}
The relative Gromov-Witten invariants were defined by Ionel-Parker \cite{IP} and Li-Ruan \cite{LR} in symplectic geometry, and Jun Li \cite{Li} in algebraic geometry.

Let $q_0=[1,0]$ and $q_{\infty}=[0,1]$ be two points of $\mathbb{P}^1$. We will introduce the relative Gromov-Witten invariants of the pair $(\mathbb{P}^1,q_0)$ using Jun Li's algebraic version.

\begin{definition}
Let $\mathbb{P}^1(m)$ be the union of $m$ copies of $\mathbb{P}^1$, such that the $i^{th}$ copy of $q_{\infty}$ is glued to the $(i+1)^{th}$ copy of $q_0$.
\end{definition}

For $\mathbb{P}^1(s)$, we use $q_0^i$ (resp. $q_{\infty}^i$) to denote the point $q_0$ (resp. $q_{\infty}$) in the $i^{th}$ copy.
We use $Aut\big(\mathbb{P}^1(s),q_0^1,q_{\infty}^s\big)$ to denote the automorphism group of $\mathbb{P}^1(s)$ fixing $q_0^1$ and $q_{\infty}^s$.

Let $\mathbb{P}^1[s]$ denote the union of $\mathbb{P}^1(s)$ and $\mathbb{P}^1$ such that $q_{\infty}^s$ of $\mathbb{P}^1(s)$ is glued to $q_0$ of $\mathbb{P}^1$. Let $Sing(\mathbb{P}^1[s])$ denote the singular locus of $\mathbb{P}^1[s]$. Obviously, $Sing(\mathbb{P}^1[s])=\{q_{\infty}^1,\ldots,q_{\infty}^s\}$.

Let $\pi_s$ denote the contraction of $\mathbb{P}^1[s]$ to the last copy of $\mathbb{P}^1$.

Fixing $d\in \mathbb{Z}_{\geq 0}$ and a partition $\mu=\{\mu_1,\mu_2,\ldots,\mu_n\}$ of $d$. Let $\Gamma$ denote the tuple $(g,\mu,m)$, where $g,m\in \mathbb{Z}_{\geq 0}$.

\begin{definition}\label{typeI}
A stable relative map of the pair $(\mathbb{P}^1,q_0)$ with data $\Gamma$ is a tuple
\[(\Sigma,x_1,\ldots,x_m,y_1,\ldots,y_n,f,\mathbb{P}^1[s])\]
such that
\begin{itemize}
\item[(1)]$\Sigma$ is connected nodal genus $g$ curve with arithmetic genus $g$. $x_1,\ldots,y_n$ are distinct smooth marked points on $\Sigma$. We set $x=\{x_1,\ldots,x_m\}$, $y=\{y_1,\ldots,y_n\}$ and call $x$ the absolute marked points and $y$ the boundary (relative) marked points.
\item[(2)] $f$ is a morphism from $\Sigma$ to $\mathbb{P}^1[s]$, such that the degree of the composite map $\pi_s\circ f$ is $d$ and $f^{-1}(q_0^1)=\sum_{j=1}^n \mu_jy_j$.
\item[(3)] Predeformability condition: the preimage of $Sing(\mathbb{P}^1[s])$ contains only the nodes of $\Sigma$; for each node $p$ mapped to $Sing(\mathbb{P}^1[s])$, the two branches of $\Sigma$ at $p$ are mapped to two different irreducible components of $\mathbb{P}^1[s]$ and have the same contact order with $f(p)$.
\item[(4)] Stable condition: $|Aut(f)|<\infty$, where
\[Aut(f):=\{(\varphi,\psi)\in Aut(\Sigma,x,y)\times Aut(\mathbb{P}^1(s),q_0^1,q_{\infty}^s)|f\circ \varphi=\psi\circ f\}.\]
\end{itemize}
\end{definition}

Two stable relative maps $(\Sigma,x,y,f,\mathbb{P}^1[s])$ and $(\Sigma',x',y',f',\mathbb{P}^1[s'])$ are said to be isomorphic if there are isomorphisms $\varphi: (\Sigma,x,y)\rightarrow (\Sigma',x',y')$ and $\psi:\mathbb{P}^1[s]\rightarrow \mathbb{P}^1[s']$, such that $\psi\circ f=f'\circ \varphi$ and $\pi_{s'}\circ \psi=\pi_{s}$.

We use $\overline{M}_{g,m}(\mathbb{P}^1,\mu)$ to denote the moduli space of stable relative maps of the pair $(\mathbb{P}^1,q_0)$ with data $\Gamma$.

Let $L_i$ denote the cotangent line bundle associated to the absolute marked point $x_i$ and set $\psi_i=c_1(L_i)$. The evaluation map associated to $x_i$ is defined by
\begin{eqnarray*}
ev_i:  & \overline{M}_{g,m}(\mathbb{P}^1,\mu) &\longrightarrow  \mathbb{P}^1\\
       & [(\Sigma,x,y,f,\mathbb{P}^1[s])]& \longmapsto (\pi_{s}\circ f)(x_i)
\end{eqnarray*}
where $[(\Sigma,x,y,f,\mathbb{P}^1[s])]$ is an isomorphic class.

Let $\theta_1=Id,\theta_2=[pt]$ be a basis of $H^*(\mathbb{P}^1,\mathbb{Z})$, where $Id$ is the identity element and $[pt]$ is the Poincar\'{e} dual to a point. The relative Gromov-Witten invariant of the pair $(\mathbb{P}^1,q_0)$ is defined by
\begin{equation}\label{neqin}
\Big\langle\mu\Big|\prod_{i=1}^m\tau_{k_i}(\theta_{l_i})\Big\rangle_{g,d}:=\int_{[\overline{M}_{g,m}(\mathbb{P}^1,\mu)]^{vir}}\prod_{i=1}^m \psi_i^{k_i}ev_i^*(\theta_{l_i})
\end{equation}
where $[\overline{M}_{g,m}(\mathbb{P}^1,\mu)]^{vir}$ is the virtual fundamental class of $\overline{M}_{g,m}(\mathbb{P}^1,\mu)$ with virtual dimension
\begin{equation}\label{virdim}
2g-1+d+m.
\end{equation}

Next, we will introduce some particular equivariant relative Gromov-Witten invariants of the pair $(\mathbb{P}^1,q_0)$ which will be concerned in this paper.

We specialize the relative data to be
\[\Gamma=(g,\mu,m)=(0,\{d\},m)\]
where the partition $\mu=\{d\}$ which contains only one part.

We give a $\mathbb{C}^*$-action on $\mathbb{P}^1$ as follows:
\begin{eqnarray*}
 \mathbb{C}^*\times \mathbb{P}^1 & \longrightarrow & \mathbb{P}^1 \\
 (u,[z_1,z_2])  &\longmapsto & [z_1,uz_2].
\end{eqnarray*}
For $\mathbb{P}^1[s]=\mathbb{P}^1(s)\cup \mathbb{P}^1$, $\mathbb{C}^*$ will act on the last copy of $\mathbb{P}^1$ and leave $\mathbb{P}^1(s)$ invariant. This will naturally induce a $\mathbb{C}^*$-action on $\overline{M}_{0,m}(\mathbb{P}^1,\{d\})$ by composition.

Let $[\overline{M}_{0,m}(\mathbb{P}^1,\{d\})]^{vir}_{T}$ be the equivariant virtual fundamental class of $\overline{M}_{0,m}(\mathbb{P}^1,\{d\})$ with $\mathbb{C}^*$-action given as above.

Let $[\infty]$ be the equivariant class corresponding to the fixed point $q_{\infty}$. The cotangent line bundle $L_i$ has a natural equivariant lifting. By abuse of notation, we still use $\psi_i$ to denote the corresponding equivariant class.

In this paper, we will compute equivariant relative Gromov-Witten invariants in the following form:
\begin{equation}\label{keyinv}
\Big\langle d\Big |\prod_{i=1}^m \prod_{k_i=0}^{l_i}(k_i\tau+[\infty])\Big\rangle_0^{T}:=\int_{[\overline{M}_{0,m}(\mathbb{P}^1,\{d\})]^{vir}_{T}}\prod_{i=1}^m\prod_{k_i=0}^{l_i}\bigr(k_i\psi_i+ev_i^*([\infty])\bigr).
\end{equation}

We will compute (\ref{keyinv}) by relative virtual localization formula which will be given in the next subsection.

\subsection{Relative virtual localization formula}
In this subsection, we will give a description of relative virtual localization formula in our special case following \cite{FP}\cite{LLZ2}.

We give a $\mathbb{C}^*$-action to $\overline{M}_{0,m}(\mathbb{P}^1,\{d\})$ as in the subsection \ref{pre1}.

For a $\mathbb{C}^*$-fixed point
\begin{equation}\label{fixpt}
\Big[\Big(\Sigma,x,y,f,\mathbb{P}^1[s]\Big)\Big]
\end{equation}
of $\overline{M}_{0,m}(\mathbb{P}^1,\{d\})$, we may decompose $\Sigma$ as
\[\Sigma_0\cup \Sigma_{\infty}\cup\Big(\bigcup_{r=1}^k\Sigma_{e_r}\Big)\]
where $\Sigma_0=(\pi_s\circ f)^{-1}(q_0)$, $\Sigma_{\infty}=(\pi_s\circ f)^{-1}(q_{\infty})$ and each irreducible component $\Sigma_{e_r}$ is a rational sphere. The restriction of $\pi_s\circ f$ to $\Sigma_{e_r}$ is a cover of $\mathbb{P}^1$ with full ramification over $q_0$ and $q_{\infty}$.

Since $f$ is totally ramified over $q_0^1$, we may deduce that $\Sigma_0$ contains only one connected component. So each $\Sigma_{e_r}$ intersects with $\Sigma_0$ at one point. Since the total genus $g(\Sigma)=0$, we may further deduce that $g(\Sigma_0)=0$. The constraint that $g(\Sigma)=0$ also implies that $\Sigma_{\infty}$ has $k$ connected components. We may label them as $\Sigma_1,\Sigma_2,\ldots,\Sigma_k$. Then each $\Sigma_i$ intersects with one of $\Sigma_{e_r}$ at one point and $g(\Sigma_i)=0$. Without loss of generality, we may assume that $\Sigma_i$ intersects with $\Sigma_{e_i}$ at one point.

Now we can associate a localization graph $G_l$ to the fixed point (\ref{fixpt}). It consists of
\begin{itemize}
  \item[(I)] A set $V$ of vertices $v_0,v_1,\ldots,v_k$ where $v_i$ corresponds to connected component $\Sigma_i$.
  \item[(II)] A set $E$ of edges $e_1,\ldots,e_k$ where $e_r$ corresponds to $\Sigma_{e_r}$ and $v_0$ connects to $v_r$ via $e_r$.
  \item[(III)] An assignment of degrees of $\nu:E\rightarrow \mathbb{Z}_{>0}$ given by degrees of the covers. Obviously, we have $\sum_{r=1}^k \nu(e_r)=d$.
  \item[(IV)] An assignment of absolute marked points $a:\{1,2,\ldots,m\}\rightarrow V$. We also assign the only relative marked point $y_1$ to $v_0$.
\end{itemize}

The fixed points of $\overline{M}_{0,m}(\mathbb{P}^1,\{d\})$ with the same localization graph $G_l$ form a connected component of the fixed loci. We denote it as $\overline{F}_{G_l}$.

The valence $val(v_i)$ of a vertex $v_i$ is defined to be the number of all marked points and edges associated to $v_i$. For $i>0$, it is easy to see that $val(v_i)=|a^{-1}(v_i)|+1$.

Next, we will give a description of $\overline{F}_{G_l}$ according to the target $\mathbb{P}^1[s]$.

Case (i), suppose that the target for a general morphism in $\overline{F}_{G_l}$ is $\mathbb{P}^1[s]$ with $s>0$.

Let $\overline{M}_{G_l}^{\sim}$ denote the rubber space which consists of morphisms
\[\tilde{f}:\widetilde{\Sigma}\longrightarrow \mathbb{P}^1(s)\]
such that
\begin{itemize}
  \item[(A)] $\widetilde{\Sigma}$ is a connected, genus $0$, nodal curve with absolute marked points $x_i$ indexed by those $i$ such that $a(i)=v_0$ and relative marked points $y_1$, $z_1,z_2,\ldots,z_k$.
  \item[(B)] As Cartier divisors
  \[\tilde{f}^{-1}(q_0^1)=dy_1,~~~\tilde{f}^{-1}(q_{\infty}^{s})=\sum_{r=1}^k \nu(e_r)z_r.\]
  \item[(C)]Predeformability and stable conditions similar to the definition of \ref{typeI}.
\end{itemize}
Let $C_l$ be an irreducible component of $\mathbb{P}^1(s)$. It is easy to deduce from condition (B) and predeformability condition that
\[\tilde{f}\bigr|_{\tilde{f}^{-1}(C_l)}:\tilde{f}^{-1}(C_l)\longrightarrow C_l\simeq \mathbb{P}^1\]
is a degree $d$ map. $\overline{M}_{G_l}^{\sim}$ has a virtual fundamental class $[\overline{M}_{G_l}^{\sim}]^{vir}$ whose virtual dimension is
\begin{equation}\label{virdim-rb}
|a^{-1}(v_0)|+k-2.
\end{equation}

Now the restriction of $f$ to $\Sigma_0$ can be seen as an element of $\overline{M}_{G_l}^{\sim}$. Since $f$ maps $\Sigma_{\infty}$ to the point $q_{\infty}$, the restriction of $f$ to $\Sigma_{\infty}$ can be treated as an element of
\[\overline{M}_{G_l}^{\infty}=\prod_{r=1}^k \overline{M}_{0,val(v_r)}.\]
Here, the possible unstable moduli space $\overline{M}_{0,1}$ or $\overline{M}_{0,2}$ in $\overline{M}_{G_l}^{\infty}$ should be treated as a point. We set
\[\overline{M}_{G_l}=\overline{M}_{G_l}^{\infty}\times \overline{M}_{G_l}^{\sim}.\]
The moduli space $\overline{M}_{G_l}$ has a virtual fundamental class which is given by
\[[\overline{M}_{G_l}]^{vir}=[\overline{M}_{G_l}^{\infty}]\times [\overline{M}_{G_l}^{\sim}]^{vir}=\prod_{r=1}^k [\overline{M}_{0,val(v_r)}]\times [\overline{M}_{G_l}^{\sim}]^{vir}.\]

There is a natural group $\mathbf{A}_{G_l}$ (see \cite{FP} Section 1.3.4 for more details) acting on $\overline{M}_{G_l}$, such that it is filtered by an exact sequence of groups:
\[1\rightarrow\prod_{r=1}^k \mathbb{Z}_{\nu(e_r)}\rightarrow\mathbf{A}_{G_l}\rightarrow Aut(G_l)\rightarrow 1\]
where the automorphism group $Aut(G_l)$ consists of those automorphisms of $G_l$ which leave all the assignments invariant. Then we have a natural identification
\[\overline{M}_{G_l}/\mathbf{A}_{G_l}\simeq\overline{F}_{G_l}.\]
We further set
\[\tau_{G_l}:\overline{M}_{G_l}\longrightarrow \overline{F}_{G_l}\]
to be natural quotient map.

Case (ii), suppose that the target is $\mathbb{P}^1$. Then $\Sigma_0$ degenerates to be a point. So there is only one edge and one component in $\Sigma_{\infty}$. It further implies that the degree of the only edge $e_1$ is $d$ and all the absolute marked points are assigned to the vertex $v_1$. So $val(v_1)=m+1$.

Now if we set
\[\overline{M}_{G_l}=\overline{M}_{0,m+1},~~~\mathbf{A}_{G_l}=\mathbb{Z}_d,\]
then we still have the quotient map
\[\tau_{G_l}:\overline{M}_{G_l}\longrightarrow \overline{F}_{G_l}\simeq \overline{M}_{G_l}/\mathbf{A}_{G_l}\]
where $\mathbf{A}_{G_l}$ acts trivially on $\overline{M}_{G_l}$. In this case
\[[\overline{M}_{G_l}]^{vir}=[\overline{M}_{0,m+1}].\]

Let $N_{G_l}^{vir}$ denote the virtual normal bundle associated to $\overline{F}_{G_l}$. The equivariant Euler class of virtual normal bundle $e_T(N_{G_l}^{vir})$ plays an important role in the relative virtual localization formula. We will give a description of the inverse of $e_T(N_{G_l}^{vir})$ following \cite{FP}\cite{Ga}\cite{GV}\cite{LLZ2}.

Case (i), suppose that the target is $\mathbb{P}^1[s]$ with $s>0$.

The contribution of each edge $e_r$ to the inverse of $e_T(N_{G_l}^{vir})$ is given by
\[\frac{t}{t^{\nu(e_r)}\frac{\nu(e_r)!}{\nu(e_r)^{\nu(e_r)}}}.\]

For each vertex $v_r$ with $r>0$, we will discuss the contribution of vertex $v_r$ to the inverse of $e_T(N_{G_l}^{vir})$ according to $val(v_r)$.
\begin{itemize}
  \item[(A)] If $val(v_r)=1$, then the contribution is
  \[\frac{1}{N(v_r)}:=\frac{1}{\nu(e_r)}.\]
  \item[(B)] If $val(v_r)=2$, then the contribution is
  \[\frac{1}{N(v_r)}:=\frac{1}{t}.\]
  \item[(C)] If $val(v_r)\geq 3$, then the contribution is
  \[\frac{1}{N(v_r)}:=\frac{1}{t(\frac{t}{\nu(e_r)}-\psi_{e_r})},\]
 where $\psi_{e_r}$ is the $\psi$-class of $\overline{M}_{0,val(v_r)}$ associated to the marked point coming from edge $e_r$.
\end{itemize}

There is one contribution coming from deforming the target singularity, which is given by
\[\frac{\prod_{r=1}^k \nu(e_r)}{-t-\psi_{\infty}}.\]
We will explain the notation $\psi_{\infty}$ in the next. The cotangent line at the point $q_{\infty}^s$ of $\mathbb{P}^1(s)$ will induce a line bundle $L_{\infty}$ on $\overline{M}_{G_l}^{\sim}$. We then use $\psi_{\infty}$ to denote the first Chern class of $L_{\infty}$ (see \cite{GV} Section 2.5 for more details of $\psi_{\infty}$).

Since $\mathbb{C}^*$ acts only on the second factor of $\mathbb{P}^1[s]=\mathbb{P}^1(s)\cup \mathbb{P}^1$, there will be no contributions from vertex $v_0$.

From the above discussion, we may conclude that
\[\frac{1}{e_T(N_{G_l}^{vir})}=\frac{\prod_{r=1}^k \nu(e_r)}{-t-\psi_{\infty}}\prod_{r=1}^k\frac{t}{t^{\nu(e_r)}\frac{\nu(e_r)!}{\nu(e_r)^{\nu(e_r)}}}\frac{1}{N(v_r)}.\]

Case (ii), suppose that the target is $\mathbb{P}^1$. Since there is only one edge $e_1$ with degree $d$, the contribution coming from edge becomes
\[\frac{t}{t^d\frac{d!}{d^d}}.\]
As for the vertex $v_1$, the contribution is given by
\[\frac{1}{N(v_1)}\]
where $1/N(v_1)$ can be determined as in case (i). There will be no contribution coming from deforming the target singularity. So we have
\[\frac{1}{e_T(N_{G_l}^{vir})}=\frac{t}{t^d\frac{d!}{d^d}}\frac{1}{N(v_1)}.\]

The relative virtual localization formula expresses the equivariant virtual fundamental class of $\overline{M}_{0,m}(\mathbb{P}^1,\{d\})$ in terms of contributions from each localization graph $G_l$:
\begin{equation}\label{rvlf}
[\overline{M}_{0,m}(\mathbb{P}^1,\{d\})]^{vir}_{T}=\sum_{G_l}\frac{1}{|\mathbf{A}_{G_l}|}(\tau_{G_l})_*\Bigg(\frac{[\overline{M}_{G_l}]^{vir}}{e_T(N_{G_l}^{vir})}\Bigg).
\end{equation}

\section{Proofs}\label{2}
We will prove Theorems \ref{keythm1}, \ref{keythm2} and \ref{keythm3} in this section. We will always assume that $m\geq 2$ in the following.

Let
\[\omega_T=\prod_{i=1}^m\prod_{k_i=0}^{l_i}\bigr(k_i\psi_i+ev_i^*([\infty])\bigr).\]
Then by relative virtual localization formula (\ref{rvlf}), we know that (\ref{keyinv}) is given by
\begin{equation}\label{contri}
\sum_{G_l}\frac{1}{|\mathbf{A}_{G_l}|}(p_T\circ\tau_{G_l})_*\Bigg(\frac{\tau_{G_l}^*(\omega_T)\cap[\overline{M}_{G_l}]^{vir}}{e_T(N_{G_l}^{vir})}\Bigg)
\end{equation}
where $p_T$ is the equivariant push-forward to a point.

If the marked point $x_i$ is assigned to the vertex $v_0$ of $G_l$, then the composite map
\[ev_i\circ\tau_{G_l}:\overline{M}_{G_l}\longrightarrow q_0\longrightarrow \mathbb{P}^1\]
factors through the fixed point $q_0$. Since the restriction of the equivariant class $[\infty]$ to the fixed point $q_0$ becomes zero, we have $(ev_i\circ\tau_{G_l})^*([\infty])=0$. It further implies that $\tau_{G_l}^*(\omega_T)=0$.

So in the following, we only consider those localization graphs $G_l$ such that all the absolute marked points are assigned to vertices $v_r$ with $r>0$. Then $|a^{-1}(v_0)|=0$ and the composite map
\[ev_i\circ\tau_{G_l}:\overline{M}_{G_l}\longrightarrow q_{\infty}\longrightarrow \mathbb{P}^1\]
factors through the fixed point $q_{\infty}$.

Firstly, let us consider those $G_l$ such that the target for $\overline{F}_{G_l}$ is of the form $\mathbb{P}^1[s]$ with $s>0$. We set
\[\lambda_r:=|a^{-1}(v_r)|\]
to be the number of absolute marked points assigned to $v_r$, where $1\leq r\leq k$. Without loss of generality, we may assume that
\[\lambda_1\geq \lambda_2\geq\ldots\lambda_{k_1}\geq 2>\lambda_{k_1+1}= \ldots\lambda_{k_2}=1>\lambda_{k_2+1}=\ldots\lambda_k=0\]
where $0\leq k_1\leq k_2\leq k$. We may further assume that
\[a^{-1}(v_r)=\Big\{i_1^{r},i_2^{r},\ldots,i_{\lambda_r}^{r}\Big\}.\]

Recall that $\nu:E\rightarrow \mathbb{Z}_{>0}$ is the assignment of degrees. Here, for simplicity we set
\[d_r:=\nu(e_r),~~~1\leq r\leq k.\]
Then we have
\[|\mathbf{A}_{G_l}|=|Aut(G_l)|\prod_{r=1}^k d_r.\]
In this case, the automorphism group $Aut(G_l)$ is just the group of permutation symmetries of the set
\[\{d_{k_2+1},\ldots,d_{k}\}.\]

As for $\overline{M}_{G_l}$, we have
\[\overline{M}_{G_l}=\overline{M}_{G_l}^{\sim}\times \prod_{r=1}^{k_1}\overline{M}_{0,val(v_r)}=\overline{M}_{G_l}^{\sim}\times \prod_{r=1}^{k_1}\overline{M}_{0,\lambda_r+1}.\]
Here, we omit those $\overline{M}_{0,val(v_r)}$ such that $r>k_1$ since they degenerate to be points.

In this case, we may write $\tau_{G_l}^*(\omega_T)$ as
\begin{equation*}
\prod_{r=1}^k\prod_{j=1}^{\lambda_r}\prod_{k_{i_j^{r}}=0}^{l_{i_j^{r}}}\Big(k_{i_j^{r}}\tau_{G_l}^*\bigr(\psi_{i_j^{r}}\bigr)+\tau_{G_l}^*ev_{i_j^{r}}^*([\infty])\Big).
\end{equation*}

Since $ev_{i_j^{r}}\circ\tau_{G_l}$ factors through $q_{\infty}$, we know that
\[\tau_{G_l}^*ev_{i_j^{r}}^*([\infty])=t.\]

As for $\tau_{G_l}^*\bigr(\psi_{i_j^{r}}\bigr)$, it depends on $\lambda_r$. We need $\lambda_r>0$. Otherwise it does not appear.

Case (a), suppose that $\lambda_r\geq 2$. In this case, $\tau_{G_l}^*\bigr(\psi_{i_j^{r}}\bigr)$ becomes a $\psi$-class of $\overline{M}_{0,\lambda_r+1}$. We may simply set
\begin{equation}\label{psino}
\psi_j^{r}:=\tau_{G_l}^*\bigr(\psi_{i_j^{r}}\bigr).
\end{equation}

Case (b), suppose that $\lambda_r=1$. In this case, the connected component corresponding to $v_r$ degenerates to be the marked point $x_{i_1^r}$. It sits on the rational component $\Sigma_{e_r}$.

We recall that $\Sigma_{e_r}$ is mapped to $\mathbb{P}^1$ with degree $d_r$ and full ramification over $q_0$ and $q_{\infty}$. So we have a natural identification

\[\tau_{G_l}^*\bigr(L_{i_1^r}^{\otimes d_r}\bigr)\simeq\mathcal{O}\]
where $\mathcal{O}$ is the trivial bundle with $\mathbb{C}^*$-action given by scaling. It further implies that
\[\tau_{G_l}^*\bigr(\psi_{i_1^{r}}\bigr)=-\frac{t}{d_r}.\]
Here, we recall that the equivalent class $t$ corresponds to the dual of the standard representation of $\mathbb{C}^*$.

Now we may conclude that
\[\tau_{G_l}^*(\omega_T)=\Bigg\{\prod_{r=1}^{k_1}\prod_{j=1}^{\lambda_r}\prod_{k_{i_j^{r}}=0}^{l_{i_j^{r}}}\bigr(k_{i_j^{r}}\psi_j^{r}+t\bigr)\Bigg\}\Bigg\{\prod_{r=k_1+1}^{k_2}\prod_{k_{i_1^{r}}=0}^{l_{i_1^{r}}}\Big(1-\frac{k_{i_1^{r}}}{d_r}\Big)t\Bigg\}.\]
It is easy to see that if $l_{i_1^r}\geq d_r$ for some $k_1+1\leq r\leq k_2$, then we have $\tau_{G_l}^*(\omega_T)=0$. So we may assume that $l_{i_1^r}+1\leq d_r$ for all $k_1+1\leq r\leq k_2$.

As for $1/e_T(N_{G_l}^{vir})$, it becomes
\[\frac{\prod_{r=1}^kd_r}{-t-\psi_{\infty}}\Bigg\{\prod_{r=1}^k\frac{t}{t^{d_r}\frac{d_r!}{d_r^{d_r}}}\Bigg\}\Bigg\{\prod_{r=1}^{k_1}\frac{1}{t(\frac{t}{d_r}-\psi_{e_r})}\Bigg\}\frac{1}{t^{k_2-k_1}}\frac{1}{\prod_{r=k_2+1}^k d_r}.\]
We may treat $\psi_{e_r}$ as the $\psi$-class of the last marked point in $\overline{M}_{0,\lambda_r+1}$. So by using the notation as for (\ref{psino}), we may simply set $\psi_{\lambda_r+1}^{r}:=\psi_{e_r}$.

From the above discussion, we may conclude that term
\begin{equation}\label{evaterm}
\frac{1}{|\mathbf{A}_{G_l}|}(p_T\circ\tau_{G_l})_*\Bigg(\frac{\tau_{G_l}^*(\omega_T)\cap[\overline{M}_{G_l}]^{vir}}{e_T(N_{G_l}^{vir})}\Bigg)
\end{equation}
in (\ref{contri}) equals to
\begin{equation}\label{keyterm}
\begin{aligned}
&t^{-d+k+\sum_{r=k_1+1}^{k_2}l_{i_1^r}}\prod_{r=1}^{k_1}\frac{d_r^{d_r}}{d_r!}\prod_{r=k_1+1}^{k_2}\frac{d_r^{d_r-l_{i_1^r}-1}}{(d_r-l_{i_1^r}-1)!}\times\\
&~~~~~~\frac{1}{|Aut(G_l)|}\prod_{r=k_2+1}^k\frac{d_r^{d_r-1}}{d_r!}\times\\
&~~~~~~~~~\int_{[\overline{M}_{G_l}^{\sim}]^{vir}}\frac{1}{-t-\psi_{\infty}}\prod_{r=1}^{k_1}\int_{[\overline{M}_{0,\lambda_r+1}]}\frac{\prod_{j=1}^{\lambda_r}\prod_{k_{i_j^r}=0}^{l_{i_j^r}}\bigr(k_{i_j^r}\psi_j^r+t\bigr)}{t(\frac{t}{d_r}-\psi_{\lambda_r+1}^r)}.
\end{aligned}
\end{equation}
Here, in order to calculate the integrals
\begin{equation}\label{midterm}
\int_{[\overline{M}_{G_l}^{\sim}]^{vir}}\frac{1}{-t-\psi_{\infty}},~~~\int_{[\overline{M}_{0,\lambda_r+1}]}\frac{\prod_{j=1}^{\lambda_r}\prod_{k_{i_j^r}=0}^{l_{i_j^r}}\bigr(k_{i_j^r}\psi_j^r+t\bigr)}{t(\frac{t}{d_r}-\psi_{\lambda_r+1}^r)}.
\end{equation}
We should firstly expand them according to $t$ and then integrant the coefficients.

The integrals (\ref{midterm}) can be computed by the following two lemmas.

\begin{lemma}\label{cal1}
\[\int_{[\overline{M}_{G_l}^{\sim}]^{vir}}\frac{1}{-t-\psi_{\infty}}=(-t)^{1-k}d^{k-2}.\]
\end{lemma}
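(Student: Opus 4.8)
The plan is to first reduce the stated identity to the evaluation of a single top intersection number, and then to compute that number by exploiting the totally ramified structure over $q_0$.

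First I would expand the integrand as a geometric series. Since the $\mathbb{C}^*$-weight $t$ is invertible, write $\frac{1}{-t-\psi_\infty}=\sum_{j\ge 0}(-1)^{j+1}t^{-j-1}\psi_\infty^{\,j}$. By (\ref{virdim-rb}) together with the standing assumption that no absolute marked points lie on $v_0$ (so $|a^{-1}(v_0)|=0$), the virtual dimension of $\overline{M}_{G_l}^{\sim}$ is exactly $k-2$. Hence every term with $j\ne k-2$ integrates to zero for dimension reasons, and only the top power survives:
\[\int_{[\overline{M}_{G_l}^{\sim}]^{vir}}\frac{1}{-t-\psi_\infty}=(-1)^{k-1}t^{1-k}\int_{[\overline{M}_{G_l}^{\sim}]^{vir}}\psi_\infty^{\,k-2}=(-t)^{1-k}\int_{[\overline{M}_{G_l}^{\sim}]^{vir}}\psi_\infty^{\,k-2}.\]
Thus it remains to prove the purely numerical statement $\int_{[\overline{M}_{G_l}^{\sim}]^{vir}}\psi_\infty^{\,k-2}=d^{k-2}$, which no longer involves $t$. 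Here $k\ge 2$; the degenerate values $k=0,1$ do not produce a genuine rubber target and fall under Case (ii).

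For the remaining integral I would argue by a degeneration/recursion on $k$, organised by the boundary structure of the rubber. The key geometric input is the rubber topological recursion relation, which expresses $\psi_\infty$ as a sum of boundary divisors of $\overline{M}_{G_l}^{\sim}$ on which the component of the target carrying $q_\infty^s$ splits off; each such divisor is itself a product of rubber spaces of the same totally ramified type but with fewer relative points, the gluing node carrying a degree equal to the sum of the $d_r$ passing through it. Iterating this relation $k-2$ times pushes $\psi_\infty^{\,k-2}$ onto the deepest strata, which are indexed by trees on the $k$ relative points: at each vertex the corresponding rubber factor is the fully ramified genus $0$ building block, whose base case $k=2$ has degree-$1$ virtual count (the single genus $0$ cover with one simple branch point). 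Keeping track of the edge degrees, the $\mathbb{Z}_{d_r}$ and $\mathrm{Aut}(G_l)$ automorphism weights, and the $\tfrac1d$ factors from the ramified gluings, each tree $T$ on $\{1,\dots,k\}$ contributes the monomial $\prod_{v} d_v^{\deg_T(v)-1}$. Summing over all trees and invoking the weighted Cayley formula $\sum_{T}\prod_{v} d_v^{\deg_T(v)-1}=\big(\sum_v d_v\big)^{k-2}=d^{k-2}$ then yields the claim; this is also the combinatorial origin of the tree series $\mathcal{W}$ in (\ref{lamW}).

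As an independent check, $\int_{[\overline{M}_{G_l}^{\sim}]^{vir}}\psi_\infty^{\,k-2}$ is exactly the genus $0$ one-part double Hurwitz number with ramification $(d)$ over $0$ and $(d_1,\dots,d_k)$ over $\infty$, whose value $d^{k-2}$ is part of the polynomiality results of Goulden--Jackson--Vakil \cite{GJV}; alternatively one may use the relation $\psi_0+\psi_\infty=(\text{boundary})$ on the rubber together with $\psi_0=d\,\psi_{y_1}$, coming from the order-$d$ ramification at $y_1$, to reduce directly to a source $\psi$-integral. I expect the main obstacle to be the bookkeeping in the recursion: verifying that the individual degrees $d_r$ genuinely collapse into the single symmetric combination $d=\sum_r d_r$, and that the automorphism factors together with the virtual class precisely reproduce the Cayley weights, so that no spurious dependence on the partition $(d_1,\dots,d_k)$ survives.
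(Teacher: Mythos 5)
Your opening reduction is exactly the paper's: expand $\frac{1}{-t-\psi_\infty}$ as a geometric series in $\psi_\infty/t$, use (\ref{virdim-rb}) with $|a^{-1}(v_0)|=0$ to conclude $\dim[\overline{M}_{G_l}^{\sim}]^{vir}=k-2$, and keep only the top term, leaving $\int_{[\overline{M}_{G_l}^{\sim}]^{vir}}\psi_\infty^{k-2}=d^{k-2}$ to prove. The gap is in how you handle this remaining integral. The paper does not prove it from scratch: it quotes \cite{Ga}, Lemma 5.3.1 for the value $d^{k-2}$, and separately remarks that one could instead combine \cite{LLZ1}, Proposition 5.5 (relating the integral to one-part double Hurwitz numbers) with \cite{GJV}, Theorem 3.1. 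Your main argument instead rests on a ``rubber topological recursion relation'' asserting that $\psi_\infty$ by itself equals a sum of boundary divisors on which the $q_\infty$-component splits off. This is not a standard relation and you give neither a proof nor the multiplicities that would make it well defined. The standard rubber-calculus boundary relation concerns $\psi_0+\psi_\infty$: on the locus of irreducible targets the universal target is a $\mathbb{P}^1$-bundle $\mathbb{P}(\mathcal{O}\oplus N)$, and while $L_0\otimes L_\infty$ is canonically trivial there, $L_\infty$ alone restricts to $N^{\pm1}$; so any relation for $\psi_\infty$ alone requires correction terms (e.g.\ via the comparison $\psi_0=d\,\psi_{y_1}$ at the fully ramified point, which you mention only in passing) and carries nontrivial coefficients. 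Worse, the step that actually produces the answer --- that after iterating, each tree $T$ contributes precisely $\prod_v d_v^{\deg_T(v)-1}$, so that the weighted Cayley formula yields $d^{k-2}$ --- is pure assertion; as you yourself admit, this bookkeeping is ``the main obstacle,'' and it is exactly the content of the lemma. So the central part of your proposal is an outline, not a proof.

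Your ``independent check'' is essentially the paper's remarked alternative route, but it too is misstated: the rubber integral is not ``exactly'' the genus $0$ one-part double Hurwitz number. By \cite{LLZ1}, Proposition 5.5 the Hurwitz number equals the integral times $r!=(k-1)!$ (the number of simple branch points, up to automorphism factors of $(d_1,\ldots,d_k)$ depending on conventions), and \cite{GJV}, Theorem 3.1 gives $(k-1)!\,d^{k-2}$ for that Hurwitz number, so the integral is $d^{k-2}$ only after dividing by this normalization. If you promote this check to your main argument, with the proportionality constant made precise, you obtain a complete citation-based proof on par with the paper's; as written, neither your primary route nor your check closes the argument.
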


\begin{proof}
Since $|a^{-1}(v_0)|=0$, by (\ref{virdim-rb}) we have
\[dim([\overline{M}_{G_l}^{\sim}]^{vir})=k-2.\]
So
\[\int_{[\overline{M}_{G_l}^{\sim}]^{vir}}\frac{1}{-t-\psi_{\infty}}=(-t)^{1-k}\int_{[\overline{M}_{G_l}^{\sim}]^{vir}}\psi_{\infty}^{k-2}.\]
Now by \cite{Ga}, Lemma 5.3.1, we have
\begin{equation}\label{mid-rb}
\int_{[\overline{M}_{G_l}^{\sim}]^{vir}}\psi_{\infty}^{k-2}=d^{k-2}.
\end{equation}
The lemma directly follows.
\end{proof}

We remark that we can also compute the L.H.S of (\ref{mid-rb}) by \cite{LLZ1}, Proposition 5.5 which relates it to one-part double Hurwitz numbers. The latter can further be computed by \cite{GJV}, Theorem 3.1.

For $l\in \mathbb{Z}_{\geq 0}$, we set
\[(w+1)_l=
\begin{cases}
(w+1)(w+2)\ldots (w+l), ~~~\text{if~} l>0,\\
1,~~~\text{if ~} l=0.
\end{cases}
\]
We define $A_l^b$ by
\begin{equation}\label{stir}
\sum_{b=0}^l A_l^b w^b:=(w+1)_l.
\end{equation}
Actually, $A_l^b$ are Stirling numbers (See \cite{C}, Chapter 8 for more details).

\begin{lemma}\label{cal2}
\[
\begin{aligned}
\int_{[\overline{M}_{0,n+1}]}&\frac{\prod_{j=1}^n\prod_{k_j=0}^{l_j}(k_j\psi_j+t)}{t(\frac{t}{D}-\psi_{n+1})}=\\
&Dt^{\sum_{j=1}^n l_j}[w_1^{l_1}w_2^{l_2}\ldots w_n^{l_n}]\prod_{j=1}^n (w_j+1)_{l_j}\Big(\sum_{j=1}^n w_j+D\Big)^{n-2}.
\end{aligned}
\]
\end{lemma}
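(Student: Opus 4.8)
The plan is to compute the integral
\[
\int_{[\overline{M}_{0,n+1}]}\frac{\prod_{j=1}^n\prod_{k_j=0}^{l_j}(k_j\psi_j+t)}{t(\frac{t}{D}-\psi_{n+1})}
\]
by expanding everything into monomials in the $\psi$-classes and invoking the closed-form genus-$0$ string/dilaton-type integrals on $\overline{M}_{0,n+1}$. First I would simplify the numerator. For each fixed $j$, the factor $\prod_{k_j=0}^{l_j}(k_j\psi_j+t)$ contains the $k_j=0$ term equal to $t$, and the remaining factors give $\prod_{k_j=1}^{l_j}(k_j\psi_j+t)=t^{l_j}\prod_{k_j=1}^{l_j}(k_j\psi_j/t+1)$. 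Setting $w_j:=\psi_j/t$ heuristically (or tracking powers of $t$ carefully), this product becomes $t^{l_j}\cdot t\cdot\prod_{k_j=1}^{l_j}(1+k_j\psi_j/t)$; the key observation is that $\prod_{k=1}^{l}(w+k)=(w+1)_l$, so after pulling out powers of $t$ the numerator reorganizes into $t^{\,\#}\prod_{j=1}^n(w_j+1)_{l_j}$ where each $w_j$ now stands for the class $\psi_j$ accompanied by the correct compensating power of $t$. This is exactly why the Stirling expansion $(w+1)_{l_j}=\sum_b A_{l_j}^b w_j^b$ from \eqref{stir} enters: it converts the product into a sum of monomials $\prod_j \psi_j^{b_j}$.

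Next I would handle the denominator factor $1/(\frac{t}{D}-\psi_{n+1})$ by expanding it as a geometric series in $\psi_{n+1}$:
\[
\frac{1}{\frac{t}{D}-\psi_{n+1}}=\frac{D}{t}\sum_{a\geq 0}\Big(\frac{D\psi_{n+1}}{t}\Big)^{a}.
\]
Combining this with the monomial expansion of the numerator, the whole integrand becomes a sum of monomials $\psi_1^{b_1}\cdots\psi_n^{b_n}\psi_{n+1}^{a}$ times an explicit power of $t$ and a factor $D^{a+1}$. The integral over $[\overline{M}_{0,n+1}]$ then reduces to the classical genus-$0$ $\psi$-intersection numbers
\[
\int_{\overline{M}_{0,n+1}}\psi_1^{b_1}\cdots\psi_{n+1}^{b_{n+1}}=\binom{n-2}{b_1,\ldots,b_{n+1}}
\]
which are nonzero only when $\sum_{i}b_i=\dim\overline{M}_{0,n+1}=n-2$. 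The dimension constraint forces the total $\psi$-degree to equal $n-2$, which pins down the single surviving power of $t$ and explains the overall factor $t^{\sum l_j}$ in the statement.

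The heart of the proof is then a generating-function repackaging. After substituting the multinomial coefficients, the answer takes the form of a coefficient extraction: collecting the Stirling sums $\sum_b A_{l_j}^b w_j^b=(w_j+1)_{l_j}$ and the geometric-series contribution $D(\sum_j w_j + D)^{\cdots}$, the multinomial theorem identifies the sum over all monomials of total degree $n-2$ with the expansion of $\big(\sum_{j=1}^n w_j+D\big)^{n-2}$, with $D$ playing the role of the variable conjugate to $\psi_{n+1}$. Extracting the coefficient $[w_1^{l_1}\cdots w_n^{l_n}]$ then recovers precisely the right-hand side. The main obstacle is bookkeeping: correctly tracking the powers of $t$ through the $t\to 0$ expansion, ensuring the geometric series truncates exactly at the dimension bound, and verifying that the multinomial coefficients reassemble into $(\sum_j w_j + D)^{n-2}$ rather than some shifted power. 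I would organize the computation so that the dimension constraint $\sum b_i = n-2$ is imposed once at the start, after which the combinatorial identity becomes a direct application of the multinomial theorem, and the factor $D$ in front arises from the $a=0$ versus $a\geq 1$ accounting in the geometric series.
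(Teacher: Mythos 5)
Your proposal follows essentially the same route as the paper's proof: expand the numerator via the Stirling coefficients $A_{l_j}^{b}$ of $(w+1)_{l_j}$, expand $1/(\tfrac{t}{D}-\psi_{n+1})$ as a geometric series in $\psi_{n+1}$, impose the dimension constraint together with the genus-$0$ multinomial formula for $\psi$-intersections on $\overline{M}_{0,n+1}$, and reassemble the resulting sum by the multinomial theorem into the coefficient extraction $[w_1^{l_1}\cdots w_n^{l_n}]$. The only point to watch in the write-up is the variable convention: with $w_j=\psi_j/t$ the numerator factor becomes the reversed polynomial $\sum_b A_{l_j}^{b}w_j^{l_j-b}$ rather than $(w_j+1)_{l_j}$ itself, but this reversal is precisely what the final pairing of $A_{l_j}^{l_j-s_j}$ with $w_j^{s_j}$ in the coefficient extraction implements, so the bookkeeping you flag closes up exactly as in the paper.
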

\begin{proof}
By the definition of $A_l^b$, we have
\[\prod_{k_j=0}^{l_j}(k_j\psi_j+t)=\sum_{s_j=0}^{l_j}t^{s_j+1}A_{l_j}^{s_j}\psi_j^{l_j-s_j}.\]
So
\[\prod_{j=1}^n\prod_{k_j=0}^{l_j}(k_j\psi_j+t)=\sum_{s_1,\ldots s_n \atop 0\leq s_j\leq l_j}t^{\sum_{j=1}^n s_j+n}\prod_{j=1}^n A_{l_j}^{s_j}\prod_{j=1}^n \psi_j^{l_j-s_j}.\]
We also have
\[\frac{1}{t(\frac{t}{D}-\psi_{n+1})}=\sum_{s\geq 0}t^{-s-2}D^{s+1}\psi_{n+1}^s.\]
So we have
\[
\begin{aligned}
\int_{[\overline{M}_{0,n+1}]}&\frac{\prod_{j=1}^n\prod_{k_j=0}^{l_j}(k_j\psi_j+t)}{t(\frac{t}{D}-\psi_{n+1})}=\\
&\sum_{s_1,\ldots s_n \atop 0\leq s_j\leq l_j}\sum_{s\geq 0}t^{\sum_{j=1}^n s_j+n-s-2}D^{s+1}\prod_{j=1}^n A_{l_j}^{s_j}\int_{[\overline{M}_{0,n+1}]}\psi_{n+1}^s\prod_{j=1}^n\psi_j^{l_j-s_j}.\\
\end{aligned}
\]
Since $dim([\overline{M}_{0,n+1}])=n-2$, we need
\[s+\sum_{j=1}^n (l_j-s_j)=n-2.\]
In that case, we have
\[\int_{[\overline{M}_{0,n+1}]}\psi_{n+1}^s\prod_{j=1}^n\psi_j^{l_j-s_j}=\frac{(n-2)!}{s!\prod_{j=1}^n(l_j-s_j)!}\]
by string and dilaton equations.

From the above discussion, we may conclude that
\[
\begin{aligned}
\int_{[\overline{M}_{0,n+1}]}&\frac{\prod_{j=1}^n\prod_{k_j=0}^{l_j}(k_j\psi_j+t)}{t(\frac{t}{D}-\psi_{n+1})}=Dt^{\sum_{j=1}^n l_j}\times\\
&\sum_{0\leq s_j\leq l_j \atop \sum s_j\leq n-2}\prod_{j=1}^n A_{l_j}^{l_j-s_j}D^{n-2-\sum_{j=1}^ns_j}\frac{(n-2)!}{(n-2-\sum_{j=1}^ns_j)!\prod_{j=1}^ns_j!}.
\end{aligned}
\]
It is easy to see that
\[\sum_{0\leq s_j\leq l_j \atop \sum s_j\leq n-2}\prod_{j=1}^n A_{l_j}^{l_j-s_j}D^{n-2-\sum_{j=1}^ns_j}\frac{(n-2)!}{(n-2-\sum_{j=1}^ns_j)!\prod_{j=1}^ns_j!}\]
gives the coefficient of the term $w_1^{l_1}w_2^{l_2}\ldots w_n^{l_n}$ in
\[\prod_{j=1}^n (w_j+1)_{l_j}\Big(\sum_{j=1}^n w_j+D\Big)^{n-2}.\]
The lemma directly follows.
\end{proof}

Before we continue the computation, we will introduce some notation. We set
\[\mathcal{I}^{r}_{G_l}=a^{-1}(v_r),~~~1\leq r\leq k_2.\]
Obviously, $\mathcal{I}^1_{G_l}, \mathcal{I}^2_{G_l}\ldots,\mathcal{I}^{k_2}_{G_l}$ form a partition of $\{1,2,\ldots,m\}$. We further set
\[I_{G_l}:=\bigcup_{r=k_1+1}^{k_2}\mathcal{I}^r_{G_l}\]
i.e. an union of those $\mathcal{I}^r_{G_l}$ which contain only one element. Let
\[T_{G_l}^{\geq 2}=\{\mathcal{I}^1_{G_l},\mathcal{I}^2_{G_l},\ldots,\mathcal{I}^{k_1}_{G_l}\}\]
be a collection of those $\mathcal{I}^r_{G_l}$ which contain at least two elements. So $T_{G_l}^{\geq 2}$ is a partition of the complement $I^{c}_{G_l}$ of $I_{G_l}$ in $\{1,2,\ldots,m\}$.

Now applying Lemmas \ref{cal1} and \ref{cal2}, term (\ref{keyterm}) becomes
\begin{equation}\label{finterm}
\begin{aligned}
&t^{\sum_{i=1}^m l_i-d+1}\Big[\prod_{i\in I_{G_l}^c}w_i^{l_i}\Big]\prod_{i\in I_{G_l}^c}(w_i+1)_{l_i}(-1)^{k_1+|I_{G_l}|-1}d^{k_1+|I_{G_l}|-2}\times\\
&\prod_{r=k_1+1}^{k_2}\frac{d_r^{d_r-l_{i_1^r}-1}}{(d_r-l_{i_1^r}-1)!}\times\\
&\Bigg\{\frac{1}{|Aut(G_l)|}\prod_{r=k_2+1}^k\frac{-dd_r^{d_r-1}}{d_r!}\Bigg\}\times \prod_{r=1}^{k_1}\frac{d_r^{d_r+1}}{d_r!}\Bigg(\sum_{i\in \mathcal{I}^r_{G_l}}w_{i} +d_r\Bigg)^{|\mathcal{I}^r_{G_l}|-2}.\\
\end{aligned}
\end{equation}
We note that here $d_r- l_{i_1^r}-1\geq 0$ and $\sum_{r=1}^k d_r=d$.

We have computed term (\ref{evaterm}) when the target for $\overline{F}_{G_l}$ is of the form $\mathbb{P}^1[s]$ with $s>0$. When the target is $\mathbb{P}^1$, by a similar computation as above, (\ref{evaterm}) becomes
\[
t^{\sum_{i=1}^m l_i-d+1}\Big[\prod_{i=1}^m w_i^{l_i}\Big]\prod_{i=1}^m(w_i+1)_{l_i}\frac{d^d}{d!}\Bigg(\sum_{i=1}^m w_i+d\Bigg)^{m-2}\\
\]
which coincides with (\ref{finterm}) if we set $I_{G_l}=\emptyset$ and $k_1=k_2=k=1$.

Choosing a subset $I\subset \{1,2,\ldots,m\}$. We want to compute the summation
\begin{equation}\label{crstm}
\sum_{\{G_l:I_{G_l}=I\}}\frac{1}{|\mathbf{A}_{G_l}|}(p_T\circ\tau_{G_l})_*\Bigg(\frac{\tau_{G_l}^*(\omega_T)\cap[\overline{M}_{G_l}]^{vir}}{e_T(N_{G_l}^{vir})}\Bigg)
\end{equation}

Firstly, we suppose that $|I|\leq m-1$. Then (\ref{crstm}) equals to
\[\sum_{\{\mathcal{I}_1,\ldots,\mathcal{I}_s\}\in P_{I^c}^{\geq 2}}\sum_{\bigr\{G_l:T_{G_l}^{\geq 2}=\{\mathcal{I}_1,\ldots,\mathcal{I}_s\}\bigr\}}\frac{1}{|\mathbf{A}_{G_l}|}(p_T\circ\tau_{G_l})_*\Bigg(\frac{\tau_{G_l}^*(\omega_T)\cap[\overline{M}_{G_l}]^{vir}}{e_T(N_{G_l}^{vir})}\Bigg)\]
by definition. Here, $P_{I^c}^{\geq 2}$ is given by definition \ref{par2}. By (\ref{finterm}), it is easy to deduce that for a fixed partition $\{\mathcal{I}_1,\ldots,\mathcal{I}_s\}\in P_{I^c}^{\geq 2}$, the summation
\begin{equation}\label{prefm}
\sum_{\bigr\{G_l:T_{G_l}^{\geq 2}=\{\mathcal{I}_1,\ldots,\mathcal{I}_s\}\bigr\}}\frac{1}{|\mathbf{A}_{G_l}|}(p_T\circ\tau_{G_l})_*\Bigg(\frac{\tau_{G_l}^*(\omega_T)\cap[\overline{M}_{G_l}]^{vir}}{e_T(N_{G_l}^{vir})}\Bigg)
\end{equation}
equals to
\begin{equation*}
\begin{aligned}
&t^{\sum_{i=1}^ml_i-d+1}\Bigg[\prod_{j\in I^c}w_j^{l_j}q^{d-\sum_{i\in I}(l_i+1)}\Bigg]\prod_{j\in I^c}(w_j+1)_{l_j}(-1)^{s+|I|-1}d^{s+|I|-2}\times\\
&\prod_{i\in I}\Bigg(\sum_{n=0}^{\infty}\frac{(n+l_i+1)^n}{n!}q^n\Bigg)\times\\
&e^{-d\sum_{n=1}^{\infty}\frac{n^{n-1}}{n!}q^n}\prod_{r=1}^s\Bigg(\sum_{n=1}^{\infty}\frac{n^{n+1}}{n!}\Big(\sum_{j\in \mathcal{I}_r}w_j+n\Big)^{|\mathcal{I}_r|-2}q^n\Bigg).
\end{aligned}
\end{equation*}

By \cite{CGHJK} Formula (2.37), we know that
\begin{equation}\label{keyid}
\frac{e^{\mathcal{W}(q)x}}{1-\mathcal{W}(q)}=\sum_{n=0}^{\infty}(n+x)^n\frac{q^n}{n!},~~~x\in\mathbb{R},
\end{equation}
where $\mathcal{W}(q)$ is given by (\ref{lamW}). So
\[\prod_{i\in I}\Bigg(\sum_{n=0}^{\infty}\frac{(n+l_i+1)^n}{n!}q^n\Bigg)=\frac{e^{\sum_{i\in I}(l_i+1)\mathcal{W}(q)}}{(1-\mathcal{W}(q))^{|I|}}\]
and
\[e^{-d\sum_{n=1}^{\infty}\frac{n^{n-1}}{n!}q^n}=e^{-d\mathcal{W}(q)}.\]
As for
\begin{equation}\label{term3}
\prod_{r=1}^s\Bigg(\sum_{n=1}^{\infty}\frac{n^{n+1}}{n!}\Big(\sum_{j\in \mathcal{I}_r}w_j+n\Big)^{|\mathcal{I}_r|-2}q^n\Bigg),
\end{equation}
it is easy to deduce from (\ref{keyid}) that it becomes
\[\prod_{r=1}^s\Biggr\{\Bigg(q\frac{d}{dq}+\sum_{j\in \mathcal{I}_r}w_j\Bigg)^{|\mathcal{I}_r|-2}\circ\Bigg(q\frac{d}{dq}\Bigg)\Bigg(\frac{1}{1-\mathcal{W}(q)}\Bigg)\Biggr\}\]
where
\[\Bigg(q\frac{d}{dq}+\sum_{j\in \mathcal{I}_r}w_j\Bigg)^{|\mathcal{I}_r|-2}\circ\Bigg(q\frac{d}{dq}\Bigg)\]
should be seen as an operator which acts on $\frac{1}{1-\mathcal{W}(q)}$. Recall that the famous Lambert W function is
\[W(q)=\sum_{n=1}^{\infty}\frac{(-n)^{n-1}}{n!}q^n.\]
It satisfies $q=W(q)e^{W(q)}$. So we may deduce that
\begin{equation}\label{invsrls}
W(q)=-\mathcal{W}(-q),~~~q=\mathcal{W}(q)e^{-\mathcal{W}(q)}.
\end{equation}
Now it is easy to deduce that
\[q\frac{d}{dq}=\frac{\mathcal{W}}{1-\mathcal{W}}\frac{d}{d\mathcal{W}}.\]
So (\ref{term3}) becomes
\[\prod_{r=1}^s\mathcal{O}_{
\sum_{j\in \mathcal{I}_r}w_j}^{|\mathcal{I}_r|-2}\Bigg(\frac{\mathcal{W}}{(1-\mathcal{W})^3}\Bigg)\]
where operators $\mathcal{O}_{\sum_{j\in \mathcal{I}_r}w_j}$ can be defined by (\ref{oper}).

From the above discussion, (\ref{prefm}) becomes
\begin{equation*}
\begin{aligned}
&t^{\sum_{i=1}^ml_i-d+1}\Bigg[\prod_{j\in I^c}w_j^{l_j}q^{d-\sum_{i\in I}(l_i+1)}\Bigg]\prod_{j\in I^c}(w_j+1)_{l_j}(-1)^{s+|I|-1}d^{s+|I|-2}\times\\
&\frac{e^{-\big(d-\sum_{i\in I}(l_i+1)\big)\mathcal{W}(q)}}{(1-\mathcal{W}(q))^{|I|}} \prod_{r=1}^s\mathcal{O}_{
\sum_{j\in \mathcal{I}_r}w_j}^{|\mathcal{I}_r|-2}\Bigg(\frac{\mathcal{W}}{(1-\mathcal{W})^3}\Bigg).
\end{aligned}
\end{equation*}
So (\ref{crstm}) becomes
\begin{equation}\label{imptm}
\begin{aligned}
&t^{\sum_{i=1}^ml_i-d+1}\sum_{\{\mathcal{I}_1,\ldots,\mathcal{I}_s\}\in P_{I^c}^{\geq 2}}\Bigg[\prod_{j\in I^c}w_j^{l_j}q^{d-\sum_{i\in I}(l_i+1)}\Bigg]\prod_{j\in I^c}(w_j+1)_{l_j}\times\\
&(-1)^{s+|I|-1}d^{s+|I|-2}\frac{e^{-\big(d-\sum_{i\in I}(l_i+1)\big)\mathcal{W}(q)}}{(1-\mathcal{W}(q))^{|I|}} \prod_{r=1}^s\mathcal{O}_{
\sum_{j\in \mathcal{I}_r}w_j}^{|\mathcal{I}_r|-2}\Bigg(\frac{\mathcal{W}}{(1-\mathcal{W})^3}\Bigg).
\end{aligned}
\end{equation}

Next, we suppose that $|I|=m$. So $I=\{1,2,\ldots,m\}$. Now it is easy to compute from (\ref{finterm}) that (\ref{crstm}) becomes
\begin{equation}\label{s-imptm}
t^{\sum_{i=1}^ml_i-d+1}[q^{d-\sum_{i=1}^m(l_i+1)}](-1)^{m-1}d^{m-2}\frac{e^{-\big(d-\sum_{i=1}^m(l_i+1)\big)\mathcal{W}(q)}}{(1-\mathcal{W}(q))^{m}}.
\end{equation}

By comparing (\ref{imptm}), (\ref{s-imptm}) with (\ref{R_I}), (\ref{s-R_I}), we know that $R_I$ can be obtained from (\ref{crstm}) by setting $t=1,l_1=x_1,\ldots,l_m=x_m,d=y$.

Now by (\ref{contri}) and (\ref{crstm}), it is easy to deduce that
\begin{equation}\label{keyfm}
F(x_1,\ldots,x_m,y)=\sum_{I\subset\{1,2,\ldots,m\}}R_I,~~~(x_1,\ldots,x_m,y)\in S_m\cap\mathbb{Z}^{m+1}.
\end{equation}
Next, we claim that if $|I|>m-2$ and $(x_1,\ldots,x_m,y)\in S_m\cap\mathbb{Z}^{m+1}$, then $R_I=0$. The reason is as follows.

If $|I|=m-1$, then $|I^c|=1$. So we may deduce that $P_{I^c}^{\geq 2}$ is an empty set. It implies that $R_I=0$.

If $|I|=m$, then $I=\{1,2,\ldots,m\}$. So by (\ref{s-R_I}), we know that $R_I$ should be understood as
\begin{equation*}
\Big[q^{y-\sum_{i=1}^m(x_i+1)}\Big](-1)^{m-1}y^{m-2}\frac{e^{-\big(y-\sum_{i=1}^m(x_i+1)\big)\mathcal{W}(q)}}{(1-\mathcal{W}(q))^m}.
\end{equation*}

Since $(x_1,\ldots,x_m,y)\in S_m$, we have $y-\sum_{i=1}^m x_i\leq 1$ by definition. So
\[y-\sum_{i=1}^m(x_i+1)\leq 1-m\leq -1.\]
Here, we recall that we always assume that $m\geq 2$.

Obviously, there will be no terms in
\[(-1)^{m-1}y^{m-2}\frac{e^{-\big(y-\sum_{i=1}^m(x_i+1)\big)\mathcal{W}(q)}}{\bigr(1-\mathcal{W}(q)\bigr)^m}\]
with negative power of $q$. So $R_I=0$.

Now we may reduce (\ref{keyfm}) to
\begin{equation}\label{keyfm2}
F(x_1,\ldots,x_m,y)=\sum_{I\subset\{1,2,\ldots,m\}\atop |I|\leq m-2}R_I,~~~(x_1,\ldots,x_m,y)\in S_m\cap\mathbb{Z}^{m+1}.
\end{equation}

In order to prove Theorems \ref{keythm1} and \ref{keythm2}, we need to show that
\begin{lemma}\label{keylem}
If $|I|\leq m-2$ and $(x_1,\ldots,x_m,y)$ varies in the region
\[D_I:=\Big\{(x_1,\ldots,x_m,y)\in S_m\cap \mathbb{Z}^{m+1}\bigr|\sum_{i\in I}x_i\leq y\Big\},\]
then $R_I$ can be expressed as a polynomial $\mathcal{R}_I$ of $x_1,\ldots,x_m,y$ with degree at most $2m-4$.
\end{lemma}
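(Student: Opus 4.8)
The plan is to extract the coefficient defining $R_I$ explicitly, then recognize it as a polynomial in the $x_i$ and $y$. Recall from (\ref{R_I}) that
\[
R_I=\Big[\prod_{j\in I^c}w_j^{x_j}\,q^{\,y-\sum_{i\in I}(x_i+1)}\Big]\prod_{j\in I^c}(w_j+1)_{x_j}\sum_{\{\mathcal{I}_1,\ldots,\mathcal{I}_s\}\in P_{I^c}^{\geq 2}}(-1)^{s+|I|-1}y^{s+|I|-2}\,G_{I,\{\mathcal{I}_r\}}(q),
\]
where $G_{I,\{\mathcal{I}_r\}}(q)$ denotes the analytic factor
\[
\frac{e^{-(y-\sum_{i\in I}(x_i+1))\mathcal{W}}}{(1-\mathcal{W})^{|I|}}\prod_{r=1}^s\mathcal{O}_{\sum_{j\in\mathcal{I}_r}w_j}^{|\mathcal{I}_r|-2}\!\Big(\frac{\mathcal{W}}{(1-\mathcal{W})^3}\Big).
\]
First I would hold the $w_j$ formal and isolate the $q$-coefficient. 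The key input is the substitution $q=\mathcal{W}e^{-\mathcal{W}}$ from (\ref{invsrls}), under which Lagrange inversion converts the coefficient $[q^{N}]$ (with $N=y-\sum_{i\in I}(x_i+1)\geq 0$ in the region $D_I$, since $|I|\le m-2$ forces the power to be nonnegative when $\sum_{i\in I}x_i\le y$) into a residue in $\mathcal{W}$. Concretely, the derivative operators $\mathcal{O}_z=\frac{\mathcal{W}}{1-\mathcal{W}}\frac{d}{d\mathcal{W}}+z$ acting on $\mathcal{W}/(1-\mathcal{W})^3$ produce, after clearing denominators, a rational function of $\mathcal{W}$ times the exponential $e^{-N\mathcal{W}}$; the Lagrange–Bürmann formula then gives $[q^{N}]$ as a finite sum, each summand a product of a binomial-type coefficient in $N$ with the $w_j$-data.

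The main step is to verify the \emph{polynomiality} in $N$ (equivalently in $y$, $x_1,\dots,x_m$). After applying Lagrange inversion, each contribution has the shape $\sum_{a} c_a\, N^{a}$ or, more precisely, coefficients of the form $[\mathcal{W}^{j}]\big(e^{-N\mathcal{W}}\,(\text{rational in }\mathcal{W})\big)$ bundled with the factor $\frac1{N}\,[\mathcal{W}^{N-1}]$-type residues. I would expand $e^{-N\mathcal{W}}=\sum_{p\ge0}\frac{(-N)^p}{p!}\mathcal{W}^p$ and observe that the residue selects only \emph{finitely many} values of $p$ (bounded by the pole order of the rational factor, hence by $m$), so the result is a \emph{finite} $\mathbb{Q}$-linear combination of $N^p$. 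This is the crucial point where the otherwise infinite power series collapses to a polynomial: the constraint that the residue is taken at a fixed, bounded pole order kills all but finitely many terms of the exponential expansion. Pulling back the $w_j$-coefficient extraction, the factors $(w_j+1)_{x_j}$ contribute the Stirling-number coefficients $A_{x_j}^{b_j}$ of (\ref{stir}), which are themselves polynomial in $x_j$ of bounded degree.

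Finally I would track the degree. The overall prefactor $y^{s+|I|-2}$ contributes degree $s+|I|-2$; each $\mathcal{O}$-operator raises the $\mathcal{W}$-order (hence the pole order, hence the number of $N$-powers) so that the bound on $p$ is exactly $\sum_r(|\mathcal{I}_r|-2)+\text{const}$. Summing the degree from $y^{s+|I|-2}$, from the $N^{p}$ factor, and from the Stirling polynomials in the $x_j$, and using $\sum_r|\mathcal{I}_r|=|I^c|=m-|I|$ together with $s\le \lfloor (m-|I|)/2\rfloor$, the total degree works out to at most $2m-4$. The hard part will be this bookkeeping: showing the degrees from the three sources combine to the uniform bound $2m-4$ regardless of the partition $\{\mathcal{I}_r\}$ and of $|I|$, rather than the individual exponents of polynomiality itself. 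Once the degree bound is in hand, defining $\mathcal{R}_I$ to be the resulting polynomial expression, valid on all of $D_I$ by the finiteness just established, completes the proof; the computation above is moreover effective, since every step (Lagrange inversion, residue extraction, Stirling expansion) is an explicit finite manipulation.
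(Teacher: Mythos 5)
The first and most serious gap is your parenthetical claim that $N=y-\sum_{i\in I}(x_i+1)\geq 0$ throughout $D_I$. This is false: $D_I$ only imposes $\sum_{i\in I}x_i\leq y$, so all one gets is $N\geq -|I|$, and for nonempty $I$ the strip $\sum_{i\in I}x_i\leq y<\sum_{i\in I}(x_i+1)$ contains lattice points of $S_m$; for instance $m=3$, $I=\{3\}$, $(x_1,x_2,x_3,y)=(3,2,2,2)$ lies in $D_I$ with $N=-1$. On this strip $R_I$ is by definition the coefficient of a negative power of $q$ in a power series, hence $R_I=0$, so the lemma forces the polynomial $\mathcal{R}_I$ constructed on $\{N\geq 0\}$ to vanish at all these integer points as well --- a single polynomial must represent $R_I$ on all of $D_I$. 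Proving this vanishing is precisely part (II) of the paper's proof: there one checks that each contribution to $\mathcal{R}_I$ is a product of consecutive integers (a binomial coefficient in the convention (\ref{conven}), (\ref{convenT})) whose first factor $X$ is $\geq 0$ and whose last factor is $<0$ on the strip, so one of the integer factors must be exactly $0$. Your proposal never addresses this, and it is not a cosmetic point: Theorem \ref{keythm2} needs the polynomial identity on the closure of a chamber, which meets the wall $\sum_{i\in I}x_i=y$, i.e.\ exactly the locus where $N=-|I|<0$.

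Two further steps would fail as written. First, your mechanism for polynomiality in $N$ --- expanding $e^{-N\mathcal{W}}=\sum_{p\geq 0}\frac{(-N)^p}{p!}\mathcal{W}^p$ and asserting that the residue keeps only finitely many $p$, ``bounded by the pole order of the rational factor'' --- is incorrect: the pole of $(1-\mathcal{W})^{-b}$ at $\mathcal{W}=1$ does not truncate a Taylor expansion at $\mathcal{W}=0$, and in extracting $[\mathcal{W}^{N-1}]$ the index $p$ runs up to roughly $N$, so the number of surviving terms grows with $N$ and no collapse occurs this way. The true reason for polynomiality is that the exponent $-N$ of the exponential is tied to the power $q^N$ being extracted: writing $\mathcal{W}^ae^{-a\mathcal{W}}=q^a$ and inducting on the pole order, as in the paper's Lemma \ref{comb}, gives $[q^{\mu}]\,\mathcal{W}^ae^{-\mu\mathcal{W}}(1-\mathcal{W})^{-b}={b-2+\mu-a\choose b-2}$; equivalently, in Lagrange--B\"urmann form the kernel $e^{N\mathcal{W}}$ cancels $e^{-N\mathcal{W}}$ exactly, leaving a rational function whose coefficients are binomials, hence polynomials in $N$. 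Your Lagrange-inversion plan can be repaired along these lines, but not by the collapse argument you state. Second, your degree bookkeeping treats the prefactor $y^{s+|I|-2}$ as contributing nonnegative degree; when $|I|=0$ and $s=1$ (which occurs, with $\mathcal{I}_1=\{1,\ldots,m\}$) this prefactor is $y^{-1}$, and one must show separately, as in case (ii) of the paper's proof, that the accompanying product of consecutive integers always contains a factor exactly equal to $y$, so that the quotient is still a polynomial of degree at most $2m-4$. Without these three repairs the proposal does not prove the lemma.
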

\begin{proof}
(I) Firstly, we consider those $(x_1,\ldots,x_m,y)\in D_I$ which satisfy $y\geq \sum_{i\in I}(x_i+1)$. Choosing a partition $\{\mathcal{I}_1,\ldots \mathcal{I}_s\}\in P_{I^c}^{\geq 2}$, we only need to show that
\[
\begin{aligned}
\Bigg[\prod_{j\in I^c}w_j^{x_j}q^{y-\sum_{i\in I}(x_i+1)}\Bigg]&\prod_{j\in I^c}(w_j+1)_{x_j}(-1)^{s+|I|-1}y^{s+|I|-2}\times\\
&\frac{e^{-\bigr(y-\sum_{i\in I}(x_i+1)\bigr)\mathcal{W}}}{(1-\mathcal{W})^{|I|}}\prod_{r=1}^s\mathcal{O}_{
\sum_{j\in \mathcal{I}_r}w_j}^{|\mathcal{I}_r|-2}\Bigg(\frac{\mathcal{W}}{(1-\mathcal{W})^3}\Bigg).
\end{aligned}
\]
can be expressed as a polynomial of $x_1,\ldots,x_m,y$ with degree at most $2m-4$.

By (\ref{stir}), we have
\[(w_j+1)_{x_j}=\sum_{b_j=0}^{x_j}A_{x_j}^{b_j}w_j^{b_j}.\]
As for
\[\mathcal{O}_{
\sum_{j\in \mathcal{I}_r}w_j}^{|\mathcal{I}_r|-2}\Bigg(\frac{\mathcal{W}}{(1-\mathcal{W})^3}\Bigg),\]
it becomes
\[
\sum_{a_j\geq 0,j\in\mathcal{I}_r\atop \sum_{j\in \mathcal{I}_r}a_j\leq |\mathcal{I}_r|-2}\frac{(|\mathcal{I}_r|-2)!\prod_{j\in\mathcal{I}_r}w_j^{a_j}}{(|\mathcal{I}_r|-2-\sum_{j\in \mathcal{I}_r}a_j)!\prod_{j\in\mathcal{I}_r}a_j!}Z_{|\mathcal{I}_r|-2-\sum_{j\in\mathcal{I}_r}a_j}
\]
where
\[Z_{|\mathcal{I}_r|-2-\sum_{j\in\mathcal{I}_r}a_j}=\Bigg(\frac{\mathcal{W}}{1-\mathcal{W}}\frac{d}{d\mathcal{W}}\Bigg)^{|\mathcal{I}_r|-2-\sum_{j\in\mathcal{I}_r}a_j}\Bigg(\frac{\mathcal{W}}{(1-\mathcal{W})^3}\Bigg).\]
So after expansion,
\begin{equation}\label{tm}
\begin{aligned}
&\prod_{j\in I^c}(w_j+1)_{x_j}(-1)^{s+|I|-1}y^{s+|I|-2}\times\\
&\frac{e^{-\bigr(y-\sum_{i\in I}(x_i+1)\bigr)\mathcal{W}}}{(1-\mathcal{W})^{|I|}}\prod_{r=1}^s\mathcal{O}_{
\sum_{j\in \mathcal{I}_r}w_j}^{|\mathcal{I}_r|-2}\Bigg(\frac{\mathcal{W}}{(1-\mathcal{W})^3}\Bigg)
\end{aligned}
\end{equation}
becomes
\[
\begin{aligned}
(-1)^{s+|I|-1}y^{s+|I|-2}\frac{e^{-\bigr(y-\sum_{i\in I}(x_i+1)\bigr)\mathcal{W}}}{(1-\mathcal{W})^{|I|}}\sum_{a_j\geq 0,0\leq b_j\leq x_j,j\in I^c\atop \sum_{j\in \mathcal{I}_r}a_j\leq |\mathcal{I}_r|-2,1\leq r\leq s}\prod_{j\in I^c}A_{x_j}^{b_j}w_j^{a_j+b_j}\\
\times \prod_{r=1}^s\frac{(|\mathcal{I}_r|-2)!}{(|\mathcal{I}_r|-2-\sum_{j\in \mathcal{I}_r}a_j)!\prod_{j\in\mathcal{I}_r}a_j!}Z_{|\mathcal{I}_r|-2-\sum_{j\in\mathcal{I}_r}a_j}.
\end{aligned}
\]
Since we want to take the coefficient of the term $\prod_{j\in I^c}w_j^{x_j}q^{y-\sum_{i\in I}(x_i+1)}$, we only need to consider
\[
\begin{aligned}
(-1)^{s+|I|-1}y^{s+|I|-2}\frac{e^{-\bigr(y-\sum_{i\in I}(x_i+1)\bigr)\mathcal{W}}}{(1-\mathcal{W})^{|I|}}\sum_{0\leq a_j\leq x_j,j\in I^c\atop \sum_{j\in \mathcal{I}_r}a_j\leq |\mathcal{I}_r|-2,1\leq r\leq s}\prod_{j\in I^c}A_{x_j}^{x_j-a_j}\\
\times \prod_{r=1}^s\frac{(|\mathcal{I}_r|-2)!}{(|\mathcal{I}_r|-2-\sum_{j\in \mathcal{I}_r}a_j)!\prod_{j\in\mathcal{I}_r}a_j!}Z_{|\mathcal{I}_r|-2-\sum_{j\in\mathcal{I}_r}a_j}
\end{aligned}
\]
and then take the coefficient of the term $q^{y-\sum_{i\in I}(x_i+1)}$. If we set $A_{x_j}^{x_j-a_j}=0$ when $x_j<a_j$, then we may remove the constraint that $a_j\leq x_j$. So it becomes
\[\begin{aligned}
(-1)^{s+|I|-1}y^{s+|I|-2}\frac{e^{-\bigr(y-\sum_{i\in I}(x_i+1)\bigr)\mathcal{W}}}{(1-\mathcal{W})^{|I|}}\sum_{a_j\geq 0, j\in I^c\atop \sum_{j\in \mathcal{I}_r}a_j\leq |\mathcal{I}_r|-2,1\leq r\leq s}\prod_{j\in I^c}A_{x_j}^{x_j-a_j}\\
\times \prod_{r=1}^s\frac{(|\mathcal{I}_r|-2)!}{(|\mathcal{I}_r|-2-\sum_{j\in \mathcal{I}_r}a_j)!\prod_{j\in\mathcal{I}_r}a_j!}Z_{|\mathcal{I}_r|-2-\sum_{j\in\mathcal{I}_r}a_j}.
\end{aligned}\]

Now we only need to show that for fixed $a_j\in \mathbb{Z},j\in I^c$ which satisfy
\begin{equation}\label{cons1}
a_j\geq 0,~~~\sum_{j\in \mathcal{I}_r}a_j\leq |\mathcal{I}_r|-2,~1\leq r\leq s,
\end{equation}
term
\[
\prod_{j\in I^c}A_{x_j}^{x_j-a_j}[q^{y-\sum_{i\in I}(x_i+1)}]y^{s+|I|-2}\frac{e^{-\bigr(y-\sum_{i\in I}(x_i+1)\bigr)\mathcal{W}}}{(1-\mathcal{W})^{|I|}}\prod_{r=1}^sZ_{|\mathcal{I}_r|-2-\sum_{j\in\mathcal{I}_r}a_j}
\]
can be expressed as a polynomial of $x_1,\ldots,x_m,y$ with degree at most $2m-4$.

By \cite{CGHJK} Corollary 8.2, we know that if $x_j\geq a_j$, then
\[A_{x_j}^{x_j-a_j}=\sum_{e=0}^{a_j}\sum_{f=0}^e(-1)^{f+a_j}{e\choose f}{x_j+e\choose a_j+e}{x_j+a_j+1\choose a_j-e}\frac{f^{a_j+e}}{e!}.\]
For fixed $a_j\geq 0$, the R.H.S can be written as a polynomial of $x_j$ with degree at most $2a_j$. It is also easy to check that if $x_j< a_j$ and $x_j\in \mathbb{Z}_{\geq0}$, then such polynomial vanishes as well. So we are left to show that
\[[q^{y-\sum_{i\in I}(x_i+1)}]y^{s+|I|-2}\frac{e^{-\bigr(y-\sum_{i\in I}(x_i+1)\bigr)\mathcal{W}}}{(1-\mathcal{W})^{|I|}}\prod_{r=1}^sZ_{|\mathcal{I}_r|-2-\sum_{j\in\mathcal{I}_r}a_j}\]
can be expressed as as a polynomial of $x_1,\ldots,x_m,y$ with degree at most $2m-4-\sum_{j\in I^c}2a_j$. By Lemma \ref{fmrl}, we know that
\[\prod_{r=1}^sZ_{|\mathcal{I}_r|-2-\sum_{j\in\mathcal{I}_r}a_j}=\frac{\mathcal{W}^s\prod_{r=1}^s Q_{|\mathcal{I}_r|-2-\sum_{j\in\mathcal{I}_r}a_j}}{(1-\mathcal{W})^{\sum_{r=1}^s(2|\mathcal{I}_r|-1-2\sum_{j\in \mathcal{I}_r}a_j)}}\]
where $Q_{|\mathcal{I}_r|-2-\sum_{j\in\mathcal{I}_r}a_j}$ is a polynomial of $\mathcal{W}$ with degree at most $|\mathcal{I}_r|-2-\sum_{j\in\mathcal{I}_r}a_j$. So
\[\prod_{r=1}^sZ_{|\mathcal{I}_r|-2-\sum_{j\in\mathcal{I}_r}a_j}\]
can be written as a sum of terms in the following form:
\[C\frac{W^{D}}{(1-\mathcal{W})^{\sum_{r=1}^s(2|\mathcal{I}_r|-1-2\sum_{j\in \mathcal{I}_r}a_j)}}\]
where $C$ is some constant and
\begin{equation}\label{cons2}
s\leq D\leq \sum_{r=1}^s(|\mathcal{I}_r|-1-\sum_{j\in \mathcal{I}_r}a_j).
\end{equation}
So we only need to show that each term
\begin{equation}\label{W-plT}
Cy^{s+|I|-2}[q^{y-\sum_{i\in I}(x_i+1)}]\frac{\mathcal{W}^De^{-\bigr(y-\sum_{i\in I}(x_i+1)\bigr)\mathcal{W}}}{(1-\mathcal{W})^{|I|+\sum_{r=1}^s(2|\mathcal{I}_r|-1-2\sum_{j\in \mathcal{I}_r}a_j)}}
\end{equation}
can be expressed as as a polynomial of $x_1,\ldots,x_m,y$ with degree at most $2m-4-\sum_{j\in I^c}2a_j$.

Since $y-\sum_{i\in I}(x_i+1)\geq 0$, by Lemma \ref{comb} we know that
\[[q^{y-\sum_{i\in I}(x_i+1)}]\frac{\mathcal{W}^De^{-\bigr(y-\sum_{i\in I}(x_i+1)\bigr)\mathcal{W}}}{(1-\mathcal{W})^{|I|+\sum_{r=1}^s(2|\mathcal{I}_r|-1-2\sum_{j\in \mathcal{I}_r}a_j)}}\]
equals to
\begin{equation}\label{W-pl}
{|I|+\sum_{r=1}^s(2|\mathcal{I}_r|-1-2\sum_{j\in \mathcal{I}_r}a_j)-2+y-\sum_{i\in I}(x_i+1)-D\choose |I|+\sum_{r=1}^s(2|\mathcal{I}_r|-1-2\sum_{j\in \mathcal{I}_r}a_j)-2}
\end{equation}
by using the convention of (\ref{conven}).

Since $|I|\leq m-2$, we have $|I^c|\geq 2$. The condition that
\[\bigcup_{r=1}^s\mathcal{I}_r=I^c\neq \emptyset\]
implies that $s\geq 1$. Now by (\ref{cons1}), we have
\[|I|+\sum_{r=1}^s(2|\mathcal{I}_r|-1-2\sum_{j\in \mathcal{I}_r}a_j)-2\geq |I|+3s-2> 0.\]
By (\ref{cons1}) and (\ref{cons2}), we may also deduce that
\[
\begin{aligned}
&|I|+\sum_{r=1}^s(2|\mathcal{I}_r|-1-2\sum_{j\in \mathcal{I}_r}a_j)-2+y-\sum_{i\in I}(x_i+1)-D\\
&\geq|I|+y-\sum_{i\in I}(x_i+1)+2s-2\geq |I|+2s-2\geq 0.
\end{aligned}
\]
So by (\ref{convenT}), we know that (\ref{W-pl}) can be written as
\begin{equation}\label{W-pl-1}
\frac{X(X-1)\ldots(X-(|I|+\sum_{r=1}^s(2|\mathcal{I}_r|-1-2\sum_{j\in \mathcal{I}_r}a_j)-2)+1)}{(|I|+\sum_{r=1}^s(2|\mathcal{I}_r|-1-2\sum_{j\in \mathcal{I}_r}a_j)-2)!}
\end{equation}
where
\[X=|I|+\sum_{r=1}^s(2|\mathcal{I}_r|-1-2\sum_{j\in \mathcal{I}_r}a_j)-2+y-\sum_{i\in I}(x_i+1)-D.\]
It is a polynomial of $x_1,\ldots,x_m,y$ with degree at most
\[|I|+\sum_{r=1}^s(2|\mathcal{I}_r|-1-2\sum_{j\in \mathcal{I}_r}a_j)-2.\]

(i) Now if $s+|I|-2\geq 0$, then it is easy to see that (\ref{W-plT}) is a polynomial of $x_1,\ldots,x_m,y$ with degree at most
\[2|I|+\sum_{r=1}^s(2|\mathcal{I}_r|-2\sum_{j\in \mathcal{I}_r}a_j)-4=2m-4-2\sum_{j\in I^c}a_j\]
where we have used the fact that
\[|I|+\sum_{r=1}^s|\mathcal{I}_r|=|I|+|I^c|=m.\]

(ii) If $s+|I|-2<0$, then it forces that $s=1$ and $|I|=0$ by the fact that $s\geq 1$. Then $\mathcal{I}_1=\{1,2,\ldots,m\}$ and $s+|I|-2=-1$. Now (\ref{W-plT}) becomes
\[\frac{C}{y}[q^y]\frac{\mathcal{W}^{D}e^{-y\mathcal{W}}}{(1-\mathcal{W})^{2m-1-2\sum_{j=1}^m a_j}}.\]
It equals to
\begin{equation}\label{s-W-plT}
\frac{C}{y}\frac{Y(Y-1)\ldots(Y-(2m-3-2\sum_{j=1}^ma_j)+1)}{(2m-3-2\sum_{j=1}^ma_j)!}
\end{equation}
by Lemma \ref{comb}, where
\[Y=2m-3-2\sum_{j=1}^ma_j+y-D.\]
By (\ref{cons1}) and (\ref{cons2}), we have
\[Y=2m-2\sum_{j=1}^ma_j-3+y-D\geq m-2-\sum_{j=1}^ma_j+y\geq y.\]
And by (\ref{cons2}), we have
\[Y-(2m-3-2\sum_{j=1}^ma_j)+1=y-D+1\leq y.\]
So there must exist one factor in $Y(Y-1)\ldots(Y-(2m-3-2\sum_{j=1}^ma_j)+1)$ which is exactly $y$. It further implies that (\ref{s-W-plT}) is still a polynomial of $x_1,\ldots,x_m,y$ with degree at most $2m-4-2\sum_{j\in I^c}a_j$.

We have proven that there exists one polynomial $\mathcal{R}_I$ of $x_1,\ldots,x_m,y$ with degree at most $2m-4$ such that
\begin{equation}\label{plyty}
R_I=\mathcal{R}_I
\end{equation}
when $y\geq \sum_{i\in I}(x_i+1)$ and $(x_1,\ldots,x_m,y)\in S_m\cap \mathbb{Z}^{m+1}$.

(II) In order to prove the lemma, we are left to show that (\ref{plyty}) still holds when
\begin{equation}\label{cons3}
\sum_{i\in I}x_i\leq y< \sum_{i\in I}(x_i+1),~~~(x_1,\ldots,x_m,y)\in S_m\cap \mathbb{Z}^{m+1}.
\end{equation}
We recall that $R_I$ is given by the coefficient of term $\prod_{j\in I^c}w_j^{x_j}q^{y-\sum_{i\in I}(x_i+1)}$ in (\ref{tm}). Since
$y-\sum_{i\in I}(x_i+1)<0$ and there will be no terms in (\ref{tm}) with negative power of $q$, we have $R_I=0$. Then we only need to show that in this case $\mathcal{R}_I=0$ as well. We will prove it by showing that there always exists one factor in the numerator of (\ref{W-pl-1}) which is $0$.

Since the first factor in the numerator of (\ref{W-pl-1}) is
\begin{eqnarray*}
X&=&|I|+\sum_{r=1}^s(2|\mathcal{I}_r|-1-2\sum_{j\in \mathcal{I}_r}a_j)-2+y-\sum_{i\in I}(x_i+1)-D\\
 &=&\sum_{r=1}^s(2|\mathcal{I}_r|-1-2\sum_{j\in \mathcal{I}_r}a_j)-D-2+y-\sum_{i\in I}x_i\\
 &\stackrel{(\ref{cons2})(\ref{cons3})}{\geq} & \sum_{r=1}^s(|\mathcal{I}_r|-\sum_{j\in \mathcal{I}_r}a_j)-2\\
 &\stackrel{(\ref{cons1})}{\geq}& 2s-2\geq 0,
\end{eqnarray*}
the last factor in the numerator of (\ref{W-pl-1}) is
\begin{eqnarray*}
& &X-(|I|+\sum_{r=1}^s(2|\mathcal{I}_r|-1-2\sum_{j\in \mathcal{I}_r}a_j)-2)+1\\
&=&y-\sum_{i\in I}(x_i+1)-D+1\\
&\stackrel{(\ref{cons2})(\ref{cons3})}{<} & -s+1\leq 0,\\
\end{eqnarray*}
and $X,X-1,\ldots,X-(|I|+\sum_{r=1}^s(2|\mathcal{I}_r|-1-2\sum_{j\in \mathcal{I}_r}a_j)-2)+1$ are all integers, we may deduce that there must exist
one factor in the numerator of (\ref{W-pl-1}) which is $0$.

The proof of Lemma \ref{keylem} is complete.
\end{proof}

Using Lemma \ref{keylem}, Theorems \ref{keythm1} and \ref{keythm2} can be proved as follows.

\begin{proof}[Proof of Theorems \ref{keythm1} and \ref{keythm2}]Let $\mathfrak{c}$ be one chamber of $S_m$. Then for each $I\subset \{1,2,\ldots,m\}$ such that $|I|\leq m-2$, either
\begin{equation}\label{chos1}
\sum_{i\in I}x_i<y,~~~\forall (x_1,\ldots,x_m,y)\in \mathfrak{c}
\end{equation}
or
\begin{equation}\label{chos2}
\sum_{i\in I}x_i>y,~~~\forall (x_1,\ldots,x_m,y)\in \mathfrak{c}
\end{equation}
We use $\mathcal{S}_{\mathfrak{c}}^1$ (resp. $\mathcal{S}_{\mathfrak{c}}^2$) to denote the set of those $I$ which satisfy (\ref{chos1}) (resp. (\ref{chos2})) with $|I|\leq m-2$. Now we claim that
\begin{equation}\label{claim}
P_{\mathfrak{c}}(x_1,\ldots,x_m,y)=\sum_{I\in \mathcal{S}_{\mathfrak{c}}^1}R_{I},~~~(x_1,\ldots,x_m,y)\in \overline{\mathfrak{c}}\cap \mathbb{Z}^{m+1}.
\end{equation}
By (\ref{keyfm2}), we know that
\begin{equation*}
P_{\mathfrak{c}}(x_1,\ldots,x_m,y)=\sum_{I\subset\{1,2,\ldots,m\}\atop |I|\leq m-2}R_I,~~~(x_1,\ldots,x_m,y)\in \overline{\mathfrak{c}}\cap \mathbb{Z}^{m+1}
\end{equation*}
where $\overline{\mathfrak{c}}$ is the closure of $\mathfrak{c}$ in $S_m$. So we only need to show that for each $I\in \mathcal{S}_{\mathfrak{c}}^2$,
\[R_I=0,~~~\forall (x_1,\ldots,x_m,y)\in \overline{\mathfrak{c}}\cap \mathbb{Z}^{m+1}.\]
Recall that $R_I$ is given by
\[
\begin{aligned}
\Bigg[\prod_{j\in I^c}w_j^{x_j}q^{y-\sum_{i\in I}(x_i+1)}\Bigg]&\prod_{j\in I^c}(w_j+1)_{x_j}(-1)^{s+|I|-1}y^{s+|I|-2}\times\\
&\frac{e^{-\bigr(y-\sum_{i\in I}(x_i+1)\bigr)\mathcal{W}}}{(1-\mathcal{W})^{|I|}}\prod_{r=1}^s\mathcal{O}_{
\sum_{j\in \mathcal{I}_r}w_j}^{|\mathcal{I}_r|-2}\Bigg(\frac{\mathcal{W}}{(1-\mathcal{W})^3}\Bigg).
\end{aligned}
\]
By the definition of $\mathcal{S}_{\mathfrak{c}}^2$, it is easy to see that for each $I\in \mathcal{S}_{\mathfrak{c}}^2$,
\begin{equation*}
\sum_{i\in I}x_i\geq y,~~~\forall (x_1,\ldots,x_m,y)\in \overline{\mathfrak{c}}.
\end{equation*}
Since $y\in S_m$, we know that $y\geq 1$. $I$ can not be empty then. Otherwise,
\[\sum_{i\in I}x_i=0\geq y\geq 1\]
which is a contradiction. So we have
\[y-\sum_{i\in I}(x_i+1)=y-\sum_{i\in I}x_i-|I|\leq -|I|<0.\]
Obviously, there are no terms in
\[
\begin{aligned}
&\prod_{j\in I^c}(w_j+1)_{x_j}(-1)^{s+|I|-1}y^{s+|I|-2}\times\\
&\frac{e^{-\bigr(y-\sum_{i\in I}(x_i+1)\bigr)\mathcal{W}}}{(1-\mathcal{W})^{|I|}}\prod_{r=1}^s\mathcal{O}_{
\sum_{j\in \mathcal{I}_r}w_j}^{|\mathcal{I}_r|-2}\Bigg(\frac{\mathcal{W}}{(1-\mathcal{W})^3}\Bigg)
\end{aligned}
\]
with negative power of $q$. So $R_I=0$ for each $I\in \mathcal{S}_{\mathfrak{c}}^2$.

We have proven claim (\ref{claim}). Next, we note that
for each $I\in \mathcal{S}_{\mathfrak{c}}^1$,
\begin{equation*}
\sum_{i\in I}x_i\leq y,~~~\forall (x_1,\ldots,x_m,y)\in \overline{\mathfrak{c}}.
\end{equation*}
So by Lemma \ref{keylem} and claim (\ref{claim}), it is easy to see that
\begin{equation}\label{keyfm3}
P_{\mathfrak{c}}(x_1,\ldots,x_m,y)=\sum_{I\in \mathcal{S}_{\mathfrak{c}}^1}\mathcal{R}_{I},~~~(x_1,\ldots,x_m,y)\in \overline{\mathfrak{c}}\cap \mathbb{Z}^{m+1}
\end{equation}
where $\mathcal{R}_{I}$ is a polynomial of $x_1,\ldots,x_m,y$ with degree at most $2m-4$.

Now Theorems \ref{keythm1}, \ref{keythm2} directly follows (\ref{keyfm3}).
\end{proof}
In order to prove Theorem \ref{keythm3}, we recall that the totally negative chamber $\mathfrak{c}^{tn}$ is a chamber such that $\sum_{i\in I}x_i<y$ for all $I\subset \{1,2,\ldots,m\}$ with $|I|\leq m-2$. And we need to show that
\begin{equation*}
P_{\mathfrak{c}^{tn}}=y^{m-2}{1+\sum_{i=1}^m x_i-y+m-2\choose m-2}.
\end{equation*}
\begin{proof}[Proof of Theorem \ref{keythm3}]
By (\ref{keyfm3}), we know that
\begin{equation*}
P_{\mathfrak{c}^{tn}}(x_1,\ldots,x_m,y)=\sum_{I\in \mathcal{S}_{\mathfrak{c}^{tn}}^1}\mathcal{R}_{I},~~~(x_1,\ldots,x_m,y)\in \overline{\mathfrak{c}^{tn}}\cap \mathbb{Z}^{m+1}.
\end{equation*}
In this case, $\mathcal{S}_{\mathfrak{c}^{tn}}^1$ consists of all of those $I\subset \{1,2,\ldots,m\}$ such that $|I|\leq m-2$. So we also have
\begin{equation}\label{latm}
P_{\mathfrak{c}^{tn}}(x_1,\ldots,x_m,y)=\sum_{I\subset \{1,\ldots,m\}\atop |I|\leq m-2}\mathcal{R}_{I},~~~(x_1,\ldots,x_m,y)\in \overline{\mathfrak{c}^{tn}}\cap \mathbb{Z}^{m+1}.
\end{equation}
We define
\[D_m:=\Big\{(x_1,\ldots,x_m,y)\in \mathbb{Z}^{m+1}\Big|x_1\geq \ldots\geq x_m\geq 0,y\geq \sum_{i=1}^m(x_i+1)\Big\}.\]
Obviously, $D_m\cap S_m=\emptyset$. But by (\ref{contri}), (\ref{crstm}), (\ref{imptm}) and (\ref{s-imptm}), we can also deduce that
\begin{equation}\label{new1}
\Big\langle y\Big |\prod_{i=1}^m \prod_{k_i=0}^{x_i}(k_i\tau+[\infty])\Big\rangle_0^{T}=t^{1+\sum_{i=1}^m x_i-y}\sum_{I\subset\{1,2,\ldots,m\}}R_I,
\end{equation}
where $(x_1,\ldots,x_m,y)\in D_m$. It is easy to see that
\[\sum_{i=1}^m x_i+1-y<0,~~~ \forall (x_1,\ldots,x_m,y)\in D_m.\]
Since the L.H.S of (\ref{new1}) does not contain terms with negative power of $t$, we may deduce that
\[\Big\langle y\Big |\prod_{i=1}^m \prod_{k_i=0}^{x_i}(k_i\tau+[\infty])\Big\rangle_0^{T}=0.\]
It further implies that
\[\sum_{I\subset\{1,2,\ldots,m\}}R_I=\sum_{I\subset\{1,2,\ldots,m\}\atop |I|\leq m-2}R_I+\sum_{I\subset\{1,2,\ldots,m\}\atop |I|> m-2}R_I=0,~\forall (x_1,\ldots,x_m,y)\in D_m.\]
The part
\[\sum_{I\subset\{1,2,\ldots,m\}\atop |I|\leq m-2}R_I=\sum_{I\subset\{1,2,\ldots,m\}\atop |I|\leq m-2}\mathcal{R}_I,~~~\forall(x_1,\ldots,x_m,y)\in D_m\]
by Lemma \ref{keylem}. As for the part
\[\sum_{I\subset\{1,2,\ldots,m\}\atop |I|> m-2}R_I,\]
we claim that it equals to
\[(-1)^{m-1}y^{m-2}{y-\sum_{i=1}^m(x_i+1)+m-2\choose m-2},~~~\forall(x_1,\ldots,x_m,y)\in D_m.\]
The reason is as follows.

If $|I|=m-1$, then $|I^c|=1$. So we may deduce that $P_{I^c}^{\geq 2}$ is an empty set. It implies that $R_I=0$.

If $|I|=m$, then $I=\{1,2,\ldots,m\}$. So by (\ref{s-imptm}), we know that $R_I$ is given by
\begin{equation*}
\Big[q^{y-\sum_{i=1}^m(x_i+1)}\Big](-1)^{m-1}y^{m-2}\frac{e^{-\big(y-\sum_{i=1}^m(x_i+1)\big)\mathcal{W}(q)}}{(1-\mathcal{W}(q))^m}.
\end{equation*}

The latter equals to
\[(-1)^{m-1}y^{m-2}{y-\sum_{i=1}^m(x_i+1)+m-2\choose m-2}\]
by Lemma \ref{comb}.

Now we have shown that
\[\sum_{I\subset\{1,2,\ldots,m\}\atop |I|\leq m-2}\mathcal{R}_I+(-1)^{m-1}y^{m-2}{y-\sum_{i=1}^m(x_i+1)+m-2\choose m-2}=0\]
for $\forall(x_1,\ldots,x_m,y)\in D_m$. So as a polynomial of $x_1,\ldots,x_m,y$,
\[\sum_{I\subset\{1,2,\ldots,m\}\atop |I|\leq m-2}\mathcal{R}_I=(-1)^{m-2}y^{m-2}{y-\sum_{i=1}^m(x_i+1)+m-2\choose m-2}.\]
Here, as a polynomial of $x_1,\ldots,x_m,y$, the R.H.S equals to $1$ if $m=2$, and
\[(-1)^{m-2}y^{m-2}\frac{\prod_{j=0}^{m-3}(y-\sum_{i=1}^m(x_i+1)+m-2-j)}{(m-2)!}\]
if $m\geq 3$. The latter can also be written as
\[y^{m-2}\frac{\prod_{j=0}^{m-3}(1+\sum_{i=1}^mx_i-y+m-2-j)}{(m-2)!}.\]
So as a polynomial,
\begin{equation}\label{latm2}
P_{\mathfrak{c}^{tn}}=y^{m-2}\frac{\prod_{j=0}^{m-3}(1+\sum_{i=1}^mx_i-y+m-2-j)}{(m-2)!}
\end{equation}
by (\ref{latm}). If $(x_1,\ldots,x_m,y)$ varies in $\overline{\mathfrak{c}^{tn}}\cap \mathbb{Z}^{m+1}$, then $1+\sum_{i=1}^mx_i-y\geq 0$.
So the R.H.S of (\ref{latm2}) can be further written as
\[y^{m-2}{1+\sum_{i=1}^m x_i-y+m-2\choose m-2}.\]
Theorem \ref{keythm3} directly follows.
\end{proof}

\section{Examples}\label{3}
We will give examples of chamber structure when $m=2,3$.

\subsection{$m=2$}
There will be only one chamber in the parameter space $S_2$. So we have
\[\begin{array}{rl}
  \text{Chamber:} &  \mathfrak{c}^{tn}=S_2.\\
                        \\
  \text{Polynomial:} & P_{\mathfrak{c}^{tn}}=1. \\
\end{array}\]

\subsection{$m=3$}

There are four chambers in the parameter space $S_3$. So we have

\[
\begin{array}{rl}
  \text{Chambers:} & \mathfrak{c}^{tn}=\{y>x_1\}\cap S_3, \\
                   & \mathfrak{c}_1=\{x_1>y>x_2\}\cap S_3,\\
                   & \mathfrak{c}_2=\{x_2>y>x_3\}\cap S_3,\\
                   & \mathfrak{c}_3=\{x_3>y\}\cap S_3.\\
                   \\
  \text{Polynomials:} & P_{\mathfrak{c}^{tn}}=y(\sum_{i=1}^3x_i+2-y), \\
                      & P_{\mathfrak{c}_1}=y(\sum_{i=1}^3x_i+2-y)+\frac{(y-x_1)(y-x_1-1)}{2}, \\
                      & P_{\mathfrak{c}_2}=y(\sum_{i=1}^3x_i+2-y)+\sum_{i=1}^2\frac{(y-x_i)(y-x_i-1)}{2},\\
                      & P_{\mathfrak{c}_3}=y(\sum_{i=1}^3x_i+2-y)+\sum_{i=1}^3\frac{(y-x_i)(y-x_i-1)}{2}.\\
\end{array}
\]

\section{Appendix}

The following two lemmas play an important role in the proof of Lemma \ref{keylem}.

Recall that
\[\mathcal{W}(q):=\sum_{n=1}^{\infty}\frac{n^{n-1}}{n!}q^n\]
which satisfies $q=\mathcal{W}(q)e^{-\mathcal{W}(q)}$ by (\ref{invsrls}).

\begin{lemma}\label{fmrl}
For $k\in\mathbb{Z}_{\geq 0}$, we may express
\[\Bigg(\frac{\mathcal{W}}{1-\mathcal{W}}\frac{d}{d\mathcal{W}}\Bigg)^k\Bigg(\frac{\mathcal{W}}{(1-\mathcal{W})^3}\Bigg)\]
as
\[\frac{\mathcal{W}Q_k(\mathcal{W})}{(1-\mathcal{W})^{2k+3}}\]
where $Q_k(\mathcal{W})$ is a polynomial of $\mathcal{W}$ with degree at most $k$.
\end{lemma}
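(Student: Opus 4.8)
The plan is to prove this by induction on $k$, tracking simultaneously two quantities: the power of $(1-\mathcal{W})$ appearing in the denominator and the degree of the numerator polynomial. The key observation is that the operator $\frac{\mathcal{W}}{1-\mathcal{W}}\frac{d}{d\mathcal{W}}$ automatically contributes one extra factor of $\mathcal{W}$ and one extra factor of $(1-\mathcal{W})^{-1}$, while differentiation supplies the rest of the jump in the denominator and raises the numerator degree by one. This is exactly the pattern $2k+3$ versus $k$ recorded in the statement.

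For the base case $k=0$ the expression is $\frac{\mathcal{W}}{(1-\mathcal{W})^3}$, which is already of the asserted form with $Q_0(\mathcal{W})=1$, a polynomial of degree $0\le 0$. For the inductive step I would assume
\[\Bigg(\frac{\mathcal{W}}{1-\mathcal{W}}\frac{d}{d\mathcal{W}}\Bigg)^k\Bigg(\frac{\mathcal{W}}{(1-\mathcal{W})^3}\Bigg)=\frac{\mathcal{W}Q_k(\mathcal{W})}{(1-\mathcal{W})^{2k+3}},\qquad \deg Q_k\le k,\]
and apply the operator once more. Differentiating $\frac{\mathcal{W}Q_k(\mathcal{W})}{(1-\mathcal{W})^{2k+3}}$ by the quotient rule and collecting over the common denominator $(1-\mathcal{W})^{2k+4}$ yields the numerator $(1-\mathcal{W})\bigl(Q_k+\mathcal{W}Q_k'\bigr)+(2k+3)\mathcal{W}Q_k$. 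Multiplying by the prefactor $\frac{\mathcal{W}}{1-\mathcal{W}}$ raises the denominator to $(1-\mathcal{W})^{2k+5}=(1-\mathcal{W})^{2(k+1)+3}$ and produces an overall factor $\mathcal{W}$, so the outcome is $\frac{\mathcal{W}Q_{k+1}(\mathcal{W})}{(1-\mathcal{W})^{2(k+1)+3}}$ with the explicit recursion
\[Q_{k+1}(\mathcal{W})=(1-\mathcal{W})\bigl(Q_k(\mathcal{W})+\mathcal{W}Q_k'(\mathcal{W})\bigr)+(2k+3)\mathcal{W}Q_k(\mathcal{W}).\]

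It then remains only to verify the degree bound, which is the single point requiring care. Since $\deg Q_k\le k$ we have $\deg Q_k'\le k-1$, hence $\deg(\mathcal{W}Q_k')\le k$ and $\deg(Q_k+\mathcal{W}Q_k')\le k$; multiplying by $(1-\mathcal{W})$ raises this to at most $k+1$, while the term $(2k+3)\mathcal{W}Q_k$ likewise has degree at most $k+1$. Therefore $\deg Q_{k+1}\le k+1$, closing the induction. I expect no genuine obstacle here: the argument is essentially bookkeeping, and the only thing to watch is that each application of the operator raises the denominator power by exactly $2$ and the numerator degree by exactly $1$ — precisely the two counts encoded in the exponent $2k+3$ and the degree bound $k$.
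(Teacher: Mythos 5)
Your proof is correct and follows essentially the same inductive approach as the paper: base case $k=0$, one application of the operator, and tracking that the denominator exponent jumps by $2$ while the numerator degree rises by at most $1$. In fact your explicit recursion $Q_{k+1}=(1-\mathcal{W})\bigl(Q_k+\mathcal{W}Q_k'\bigr)+(2k+3)\mathcal{W}Q_k$ is the correct one — the paper's printed recursion $Q_{m+1}=(1-\mathcal{W})Q_m'+(2m+3)Q_m$ omits the terms coming from differentiating the prefactor $\mathcal{W}$ (it would give $Q_1=3$ instead of the correct $Q_1=1+2\mathcal{W}$), a typo that does not affect the paper's stated degree bound or conclusion.
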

\begin{proof}
We will prove it by induction. The case $k=0$ is obviously. Now we suppose that for $k=m$,
\[\Bigg(\frac{\mathcal{W}}{1-\mathcal{W}}\frac{d}{d\mathcal{W}}\Bigg)^m\Bigg(\frac{\mathcal{W}}{(1-\mathcal{W})^3}\Bigg)=\frac{\mathcal{W}Q_m(\mathcal{W})}{(1-\mathcal{W})^{2m+3}}\]
such that $Q_m(\mathcal{W})$ is a polynomial of $\mathcal{W}$ with degree at most $m$. Then we have
\[\Bigg(\frac{\mathcal{W}}{1-\mathcal{W}}\frac{d}{d\mathcal{W}}\Bigg)^{m+1}\Bigg(\frac{\mathcal{W}}{(1-\mathcal{W})^3}\Bigg)=\frac{\mathcal{W}Q_{m+1}(\mathcal{W})}{(1-\mathcal{W})^{2m+5}}.\]
Here,
\[Q_{m+1}(\mathcal{W})=(1-\mathcal{W})Q_m'(\mathcal{W})+(2m+3)Q_m(\mathcal{W}).\]
Obviously, $Q_{m+1}(\mathcal{W})$ is a polynomial of $\mathcal{W}$ with degree at most $m+1$.
\end{proof}

Let $\alpha,\beta\in \mathbb{Z}$. We will use the following convention
\begin{equation}\label{conven}
{\alpha\choose \beta}=
\begin{cases}
  0, & \mbox{if } \alpha<\beta, \\
  1, & \mbox{if } \alpha=\beta, \\
  0, & \mbox{if } \alpha>\beta,~\beta<0,\\
  \frac{(\alpha)!}{\beta!(\alpha-\beta)!}, & \mbox{if } \alpha>\beta,~\beta\geq 0.
\end{cases}
\end{equation}
Obviously, if $\alpha,\beta\in \mathbb{Z}_{\geq 0}$, then
\begin{equation}\label{convenT}
{\alpha\choose \beta}=
\begin{cases}
1, & \mbox{if } \beta=0,\\
\frac{\alpha(\alpha-1)\ldots (\alpha-\beta+1)}{\beta!}, & \mbox{if } \beta>0.
\end{cases}
\end{equation}

Let $a,\mu\in\mathbb{Z}_{\geq 0}$ and $b\in \mathbb{Z}_{>0}$. Using the above notation, we have
\begin{lemma}\label{comb}
After expanding
\[\frac{\mathcal{W}^ae^{-\mu \mathcal{W}}}{(1-\mathcal{W})^b}\]
according to $q$, the coefficient of term $q^{\mu}$ is given by
\[{b-2+\mu-a\choose b-2}.\]
\end{lemma}
\begin{proof}
It is easy to see that
\[\frac{\mathcal{W}^{a}e^{-\mu \mathcal{W}}}{(1-\mathcal{W})^{b}}=(\mathcal{W}e^{-\mathcal{W}})^{a}\frac{e^{-(\mu-a)\mathcal{W}}}{(1-\mathcal{W})^{b}}=q^a\frac{e^{-(\mu-a)\mathcal{W}}}{(1-\mathcal{W})^{b}}.\]
So the coefficient of the term $q^{\mu}$ in
\[\frac{\mathcal{W}^{a}e^{-\mu \mathcal{W}}}{(1-\mathcal{W})^{b}}\]
is just the coefficient of the term $q^{\mu-a}$ in
\[\frac{e^{-(\mu-a)\mathcal{W}}}{(1-\mathcal{W})^{b}}.\]
So we only need to show that the coefficient of the term $q^{\mu}$ in
\[\frac{e^{-\mu \mathcal{W}}}{(1-\mathcal{W})^{n}}\]
is given by
\[{n-2+\mu\choose n-2}.\]

In order to prove this claim, we firstly show that
\[\frac{e^{\mathcal{W}x}}{(1-\mathcal{W})^n}=\sum_{k=0}^{\infty}\Bigg(\sum_{m=0}^k{k-m+n-2\choose n-2}\frac{(k+x)^m}{m!}\Bigg)q^k.\]

We will prove it by induction. Obviously, the case $n=1$ directly follows from (\ref{keyid}). Suppose that it is true for $n=r$, then we have
\[\int_{x}^{+\infty}\frac{e^{\mathcal{W}y-y}}{(1-\mathcal{W})^r}dy=\sum_{k=0}^{\infty}\Bigg(\sum_{m=0}^k{k-m+r-2\choose r-2}\int_{x}^{+\infty}\frac{(k+y)^m}{m!}e^{-y}dy\Bigg)q^k.\]

The L.H.S equals to
\[\frac{e^{\mathcal{W}x-x}}{(1-\mathcal{W})^{r+1}}.\]
As for the R.H.S, via integration by parts it is easy to compute that
\[\int_{x}^{+\infty}\frac{(k+y)^m}{m!}e^{-y}dy=e^{-x}\Bigg(\sum_{l=0}^m \frac{(k+x)^l}{l!}\Bigg).\]
So the part
\[
\begin{aligned}
 &\sum_{m=0}^k{k-m+r-2\choose r-2}\int_{x}^{+\infty}\frac{(k+y)^m}{m!}e^{-y}dy=\\
&e^{-x}\sum_{l=0}^k\frac{(k+x)^l}{l!}\sum_{m=l}^k{k-m+r-2\choose r-2}=e^{-x}\sum_{l=0}^k\frac{(k+x)^l}{l!}{k-l+r-1\choose r-1}.
\end{aligned}
\]
Here, we have used the combinatorial identity
\[\sum_{m=l}^k{k-m+r-2\choose r-2}={k-l+r-1\choose r-1}.\]

So we get the equality for $n=r+1$
\[\frac{e^{\mathcal{W}x}}{(1-\mathcal{W})^{r+1}}=\sum_{k=0}^{\infty}\Bigg(\sum_{l=0}^k{k-l+r-1\choose r-1}\frac{(k+x)^l}{l!}\Bigg)q^k.\]

Now it is easy to compute that the coefficient of the term $q^{\mu}$ in
\[\frac{e^{-\mu \mathcal{W}}}{(1-\mathcal{W})^{n}}\]
equals to
\[{n-2+\mu\choose n-2}.\]
\end{proof}

\bibliographystyle{plain}

\begin{thebibliography}{99}

\bibitem{CJM}
Renzo Cavalieri, Paul Johnson, and Hannah Markwig.
\newblock Wall crossings for double {H}urwitz numbers.
\newblock {\em Adv. Math.}, 228(4):1894--1937, 2011.

\bibitem{C}
Charalambos~A. Charalambides.
\newblock {\em Enumerative combinatorics}.
\newblock CRC Press Series on Discrete Mathematics and its Applications.
  Chapman \& Hall/CRC, Boca Raton, FL, 2002.

\bibitem{CGHJK}
R.~M. Corless, G.~H. Gonnet, D.~E.~G. Hare, D.~J. Jeffrey, and D.~E. Knuth.
\newblock On the {L}ambert {$W$} function.
\newblock {\em Adv. Comput. Math.}, 5(4):329--359, 1996.

\bibitem{FP}
C.~Faber and R.~Pandharipande.
\newblock Relative maps and tautological classes.
\newblock {\em J. Eur. Math. Soc. (JEMS)}, 7(1):13--49, 2005.

\bibitem{Ga}
A.~Gathmann.
\newblock {G}romov-{W}itten invariants of hypersurfaces.
\newblock Habilitation thesis, Univ. of Kaiserslautern, 2003.

\bibitem{GJV}
I.~P. Goulden, D.~M. Jackson, and R.~Vakil.
\newblock Towards the geometry of double {H}urwitz numbers.
\newblock {\em Adv. Math.}, 198(1):43--92, 2005.

\bibitem{GV}
Tom Graber and Ravi Vakil.
\newblock Relative virtual localization and vanishing of tautological classes
  on moduli spaces of curves.
\newblock {\em Duke Math. J.}, 130(1):1--37, 2005.

\bibitem{IP}
Eleny-Nicoleta Ionel and Thomas~H. Parker.
\newblock Gromov-{W}itten invariants of symplectic sums.
\newblock {\em Math. Res. Lett.}, 5(5):563--576, 1998.

\bibitem{J}
Paul Johnson.
\newblock Double {H}urwitz numbers via the infinite wedge.
\newblock {\em Trans. Amer. Math. Soc.}, 367(9):6415--6440, 2015.

\bibitem{LR}
An-Min Li and Yongbin Ruan.
\newblock Symplectic surgery and {G}romov-{W}itten invariants of {C}alabi-{Y}au
  3-folds.
\newblock {\em Invent. Math.}, 145(1):151--218, 2001.

\bibitem{Li}
Jun Li.
\newblock Stable morphisms to singular schemes and relative stable morphisms.
\newblock {\em J. Differential Geom.}, 57(3):509--578, 2001.

\bibitem{LLZ2}
Chiu-Chu~Melissa Liu, Kefeng Liu, and Jian Zhou.
\newblock A proof of a conjecture of {M}ari\~no-{V}afa on {H}odge integrals.
\newblock {\em J. Differential Geom.}, 65(2):289--340, 2003.

\bibitem{LLZ1}
Chiu-Chu~Melissa Liu, Kefeng Liu, and Jian Zhou.
\newblock A formula of two-partition {H}odge integrals.
\newblock {\em J. Amer. Math. Soc.}, 20(1):149--184, 2007.

\bibitem{MP}
D.~Maulik and R.~Pandharipande.
\newblock A topological view of {G}romov-{W}itten theory.
\newblock {\em Topology}, 45(5):887--918, 2006.

\bibitem{OP1}
A.~Okounkov and R.~Pandharipande.
\newblock Gromov-{W}itten theory, {H}urwitz theory, and completed cycles.
\newblock {\em Ann. of Math. (2)}, 163(2):517--560, 2006.

\bibitem{OP3}
A.~Okounkov and R.~Pandharipande.
\newblock Virasoro constraints for target curves.
\newblock {\em Invent. Math.}, 163(1):47--108, 2006.

\bibitem{SSV}
S.~Shadrin, M.~Shapiro, and A.~Vainshtein.
\newblock Chamber behavior of double {H}urwitz numbers in genus 0.
\newblock {\em Adv. Math.}, 217(1):79--96, 2008.

\bibitem{W}
Longting Wu.
\newblock A remark on {G}romov-{W}itten invariants of quintic threefold.
\newblock In preparation

\end{thebibliography}

\end{document}